\newtheorem{theorem}{Theorem}[section]
\newtheorem{proposition}[theorem]{Proposition}
\newtheorem{lemma}[theorem]{Lemma}
\newtheorem{corollary}[theorem]{Corollary}
\theoremstyle{definition}
\newtheorem{definition}[theorem]{Definition}
\newtheorem{example}[theorem]{Example}
\newtheorem{remark}[theorem]{Remark}
\newcommand{\ir}{{\mathsf{Irr}}}
\newcommand{\mn}{\mathbb N}
\newcommand{\cl}{{\rm cl}}
\newcommand{\ii}{{\rm int}}
\newcommand{\ua}{\mathord{\uparrow}}
\newcommand{\da}{\mathord{\downarrow}}
\newcommand{\mk}{\mathord{\mathsf{K}}}
\journal{Mathematical Structures in Computer Science}
\begin{document}

\begin{frontmatter}

\title{Strong well-filteredness of upper topology on sup-complete posets\tnoteref{t1}}
\tnotetext[t1]{This research was supported by the National Natural Science Foundation of China (Nos. 12471070, 12071199).}

\author[X. Xu]{Xiaoquan Xu\corref{mycorrespondingauthor}}
\cortext[mycorrespondingauthor]{Corresponding author}
\ead{xiqxu2002@163.com}
\address[X. Xu]{School of Mathematics and Statistics, Jiangxi Normal University, Nanchang 330000, China}
\author[Y. Yang]{Yi Yang}
\address[Y. Yang]{School of Mathematics and Statistics, Jiangxi Normal University, Nanchang 330000, China}
\ead{1354848335@qq.com}
\author[LZ. Chen]{Lizi Chen}
\address[LZ. Chen]{School of Mathematics and Statistics, Minnan Normal University, Zhangzhou 363000, China}
\ead{1403391465@qq.com}

\begin{abstract}
We first introduce and investigate a new class of $T_0$ spaces --- strong R-spaces, which are stronger than both R-spaces and strongly well-filtered spaces. It is proved that any sup-complete poset equipped with the upper topology is a strong R-space and the Hoare power space of a $T_0$-space is a strong R-space. Hence the upper topology on a sup-complete poset is strongly well-filtered and the Hoare power space of a $T_0$-space is strongly well-filtered, which answers two problems recently posed by Xu.
\end{abstract}

\begin{keyword}
Sup-complete poset; upper topology; strongly well-filtered; strong R-space; Hoare power space

\MSC 54D10; 54B20; 06B35; 06F30

\end{keyword}

\end{frontmatter}

\section{Introduction}

In domain theory and non-Hausdorff topology, we encounter numerous links between topology and order theory (cf. \cite{Abramsky-Jung-1994, GHKLMS-2003, Goubault-2013}). The upper topology and Scott topology are probably two of the most important order-compatible topologies on posets, and sobriety is probably the
most important and useful property of $T_0$-spaces (see \cite{Abramsky-Jung-1994, GHKLMS-2003, Goubault-2013, Jia-2018, Xu-Zhao-2021}). The Hofmann-Mislove Theorem reveals a very
distinct characterization for the sober spaces via open filters and illustrates the close relationship
between domain theory and topology. With the development of domain theory and non-Hausdorff topology, another two properties
also emerged as the very useful and important properties for $T_0$-spaces: the property of being a
$d$-space and the well-filteredness (see \cite{Abramsky-Jung-1994, GHKLMS-2003, Goubault-2013, Jia-2018, Keimel-Lawson-2009, Wyler-1981, Xu-Shen-Xi-Zhao-2020-2, Xu-Zhao-2021}).

In order to uncover more finer links between $d$-spaces and well-filtered spaces, Xu and Zhao \cite{Xu-Zhao-2020} introduced and discussed strong $d$-spaces, and Xu \cite{Xu-2025} introduced and studied strongly well-filtered spaces. The strong $d$-spaces and strongly well-filtered spaces possess many important properties (see \cite{Li-Jin-Miao-Chen-2022, Lawson-Xu-2024-1, Lawson-Xu-2024-2, Xu-2025, Xu-Zhao-2020, Xu-Zhao-2021}), two of the key ones being that the Scott space $\Sigma~\!\!P$ of a poset $P$ is a strong $d$-space iff it is strongly well-filtered, and if $P$ is a dcpo such that $\Sigma P$ is strongly well-filtered and $\Sigma (P\times P)=\Sigma P\times\Sigma P$, then $\Sigma P$ is sober (see \cite[Theorem 4.10 and Theorem 4.14]{Xu-2025}).

Power constructions are very important structures in domain theory, which play a fundamental role in modeling the semantics of non-deterministic programming languages. The Hoare power construction and Smyth power construction are two main power constructions (see  \cite{Heckmann-1990, Heckmann-1992, Smyth-1978}). These power constructions were traditionally given in the category of dcpos, but are easily and profitably extended to general topological spaces (see, for example, \cite[Sections 6.2.3 and 6.2.4]{Abramsky-Jung-1994} and \cite{Schalk-1993}).

In \cite{Xu-2025}, Xu posed the following two questions (see \cite[Question 7.1 and Question 7.2]{Xu-2025}):

% In domain theory and non-Hausdorff topology, the $d$-spaces, well-filtered spaces and sober spaces form three of the most important classes of $T_0$-spaces (see \cite{Abramsky-Jung-1994, GHKLMS-2003, Goubault-2013, Xu-Zhao-2021}). In order to reveal some finer links between $d$-spaces and $T_2$-spaces, Xu and Zhao \cite{Xu-Zhao-2020} introduced and discussed strong $d$-spaces, and Xu \cite{Xu-2025} introduced and studied strongly well-filtered spaces. The strong $d$-spaces and strongly well-filtered spaces possess a lot of important properties (see \cite{Li-Jin-Miao-Chen-2022, Lawson-Xu-2024-1, Lawson-Xu-2024-2, Xu-2025, Xu-Zhao-2020, Xu-Zhao-2021}), two of the key ones being that the Scott space $\Sigma~\!\!P$ of a poset $P$ is a strong $d$-space iff it is strongly well-filtered, and if $P$ is a dcpo such that $\Sigma P$ is a strong $d$-space and $\Sigma (P\times P)=\Sigma P\times\Sigma P$, then $\Sigma P$ is sober (see \cite[Theorem 4.10 and Theorem 4.14]{Xu-2025}).

\vspace{0.2cm}

$\mathbf{Question~1.}$ For a sup-complete poset $P$, is $(P, \upsilon(P))$ strongly well-filtered?

\vspace{0.2cm}

$\mathbf{Question~2.}$
Is the Hoare power space $P_H(X)$ of a $T_0$-space $X$ strongly well-filtered?

\vspace{0.2cm}

In this paper, we first introduce a new class of $T_0$-spaces --- strong R-spaces, which are stronger than both R-spaces and strongly well-filtered spaces. Some relations among $T_2$-spaces, $T_1$-spaces, strong R-spaces, R-spaces, strongly well-filtered spaces, well-filtered spaces and sober spaces are investigated. It is proved that any sup-complete poset equipped with the upper topology is a strong R-space and the Hoare power space of a $T_0$-space is a strong R-space. Hence the upper topology on a sup-complete poset is strongly well-filtered and the Hoare power space of a $T_0$-space is strongly well-filtered, which answers the two aforementioned questions.

\section{Preliminaries}
In this section, we briefly recall some basic concepts and results about ordered structures and domain theory that will be used in the paper. For further details, we refer the reader to \cite{Abramsky-Jung-1994, GHKLMS-2003, Goubault-2013}.

For a poset $P$ and $A \subseteq P$, define $\ua A=\{x\in P: a\leq x \mbox{ for some }a\in A \}$ and $\da A=\{x \in P: x \leq a \mbox{ for some }a\in A\}$. For $x\in X$, let $\ua x=\ua\{x\}$ and $\da x=\da\{x\}$. A subset $A$ is called a \emph{lower set} (resp., an \emph{upper set}) if $A=\da A$ (resp., $A=\ua A$). Define $A^{\ua}=\{u\in P: A\subseteq \da u\}$ (the set of all upper bounds of $A$ in $P$) and $A^{\da}=\{v\in P: A\subseteq \ua v\}$ (the set of all lower bounds of $A$ in $P$). The set of all nonempty upper subsets of $P$ is denoted by $\mathbf{up}(P)$. If the set of upper bounds of $A$ has a unique smallest element, we call this element the \emph{least upper bound} of $A$ or the \emph{supremum} of $A$ and write it as $\vee A$ or sup $A$. Dually, the greatest lower bound is written as $\wedge A$ or inf $A$. We use $P^{op}$ to denote the dual poset of $P$. The set of all natural numbers is denoted by $\mathbb N$. Let $\mathbb N^{+}=\mathbb N\setminus\{0\}$. When $\mathbb N$ is regarded as a poset (in fact, a chain), the order on $\mathbb N$ is the usual order of natural numbers.  For a set $X$, let $|X|$ be the cardinality of $X$ and let $\omega=|\mathbb{N}|$. The set of all subsets of $X$ is denoted by $2^X$.

A nonempty subset $D$ of a poset $P$ is called \emph{directed} if every finite subset of $D$ has an upper bound in $D$. Dually, a nonempty subset $F$ of $P$ is said to be \emph{filtered} if every finite subset of $F$ has a lower bound in $P$. The set of all directed sets of $P$ is denoted by $\mathcal{D}(P)$. The poset $P$ is called a \emph{directed complete poset}, or \emph{dcpo} for short, provided that the least upper bound $\vee D$ of $D$ exists in $P$ for any $D\in\mathcal D(P)$. The poset $P$ is called a \emph{sup semilattice} if for any two elements $a$, $b\in P$, the supremum $\mathrm{sup}\{a, b\} = a\vee b$ exists in $P$. The poset $P$ is said to be \emph{sup-complete} if every nonempty subset $A$ has a supremum $\vee A$ in $P$. Clearly, a poset $P$ is sup-complete iff $P$ is both a sup semilattice and a dcpo.

\begin{lemma}\label{lem-inf-sup-lowerbound}
Let $P$ be a sup-complete poset and $A$ be a nonempty subset of $P$. If $A^{\downarrow}$ is nonempty, then the infimum of $A$ exists and $\wedge A=\vee A^{\downarrow}$.
\begin{proof}
Since $A^{\downarrow}\neq\emptyset$ and $P$ is sup-complete, $\vee A^{\downarrow}$ exists in $P$. For any $a\in A$, we have $A^{\downarrow}=\bigcap_{c\in A}\downarrow c\subseteq\downarrow a$, and hence $\vee A^{\downarrow}\leq a$. Thus $\vee A^{\downarrow}$ is a lower bound of $A$. Suppose $y$ is another lower bound of $A$. Then $y\in A^{\downarrow}$, whence $y\leq \vee A^{\downarrow}$. Therefore, $\vee A^{\downarrow}$ is the greatest lower bound of $A$, namely, $\wedge A=\vee A^{\downarrow}$.
\end{proof}
\end{lemma}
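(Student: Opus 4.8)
The plan is to exhibit the infimum of $A$ concretely as the supremum of the set $A^{\downarrow}$ of lower bounds of $A$, and then to verify the universal property of the infimum directly; nothing beyond sup-completeness of $P$ is needed.

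First I would note that $A^{\downarrow}$ is, by hypothesis, a nonempty subset of $P$, so by sup-completeness the element $s=\vee A^{\downarrow}$ exists in $P$. The key step is to show that $s$ is itself a lower bound of $A$, i.e.\ $s\in A^{\downarrow}$: fixing $a\in A$, every element of $A^{\downarrow}$ lies below $a$ (that is exactly what it means to be a lower bound of $A$), so $a$ is an upper bound of $A^{\downarrow}$, whence $s=\vee A^{\downarrow}\leq a$ by the definition of least upper bound. Since $a\in A$ was arbitrary, $s$ is a lower bound of $A$, and therefore $s$ is the greatest element of $A^{\downarrow}$.

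It then remains only to record that the greatest lower bound of $A$ is by definition $\wedge A$: if $y$ is any lower bound of $A$, then $y\in A^{\downarrow}$ and hence $y\leq s$, which together with $s\in A^{\downarrow}$ yields $\wedge A=s=\vee A^{\downarrow}$. I do not expect a genuine obstacle; the only point deserving a moment's attention is the mild ``duality'' in the key step, where one uses elements of $A$ as \emph{upper} bounds of $A^{\downarrow}$ rather than the reverse. (More generally, the same argument shows in any poset that $A$ has an infimum whenever $A^{\downarrow}$ has a supremum, and then the two coincide; sup-completeness of $P$ is invoked only to guarantee the existence of $\vee A^{\downarrow}$.)
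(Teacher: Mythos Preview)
Your proof is correct and follows essentially the same approach as the paper: both arguments define $s=\vee A^{\downarrow}$ using sup-completeness, verify $s\leq a$ for every $a\in A$ by observing that each $a$ is an upper bound of $A^{\downarrow}$, and conclude by noting that any other lower bound $y$ lies in $A^{\downarrow}$ and hence below $s$.
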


The notions in the following definition are well-known (cf. \cite{GHKLMS-2003}).

\begin{definition}\label{def-continuous-domain} Let $P$ be a dcpo and $s, t\in P$. We say $s$ is \emph{way below} $t$, written $s\ll t$, if for each $D\in \mathcal D(P)$ for which $\vee D$ exists in $P$, $t\leq \vee D$ implies $s\leq d$ for some $d\in D$. For $x\in P$, let $\Downarrow x = \{u\in P : u\ll x\}$ and $K(P)=\{k\in P : k\ll k\}$. Points in $K(P)$ are called \emph{compact} elements of $P$.
\begin{enumerate}[\rm (1)]
\item $P$ is called a \emph{continuous domain} if for each $x\in P$, $\Downarrow x$ is directed and $x=\vee\Downarrow x$.
\item $P$ is called an \emph{algebraic domain} if for each $x\in P$, $\da x\cap K(P)$ is directed and $x=\vee (\da x\cap K(P))$.
\end{enumerate}
\end{definition}

As in \cite{GHKLMS-2003}, the \emph{upper topology} on a poset $P$, generated by the complements of the principal ideals of $P$, is denoted by $\upsilon(P)$. A subset $U$ of a poset $P$ is \emph{Scott open} if (i) $U=\mathord{\uparrow}U$, and (ii) for any directed subset $D$ for which $\vee D$ exists, $\vee D\in U$ implies $D\cap U\neq\emptyset$. All Scott open subsets of $P$ form a topology, and we call this topology the \emph{Scott topology} on $P$ and denote it by $\sigma(P)$. The space $\Sigma P=(P, \sigma(P))$ is called the \emph{Scott space} of $P$. The upper sets of $P$ form the (\emph{upper}) \emph{Alexandroff topology} $\alpha (P)$.

The following is a key feature of the Scott topology (see \cite[Proposition II-2.1]{GHKLMS-2003}).

\begin{lemma}\label{Scott-cont-charac} Let $P, Q$ be posets and $f : P \longrightarrow Q$. Then the following two conditions are equivalent:
\begin{enumerate}[\rm (1)]
	\item $f$ is Scott continuous, that is, $f : \Sigma~\!\! P \longrightarrow \Sigma~\!\! Q$ is continuous.
	\item For any $D\in \mathcal D(P)$ for which $\vee D$ exists, $f(\vee D)=\vee f(D)$.
\end{enumerate}
\end{lemma}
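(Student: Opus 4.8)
The plan is to establish the two implications independently, the only shared ingredient being the elementary observation that for any poset $Q$ and any $q\in Q$, the complement $Q\setminus\da q$ of a principal ideal is Scott open: it is an upper set by construction, and if $E$ is directed with $\vee E$ existing and $\vee E\not\leq q$, then not all $e\in E$ can satisfy $e\leq q$ (otherwise $q$ would bound $E$ and force $\vee E\leq q$), so $E$ meets $Q\setminus\da q$. These are precisely the subbasic open sets of $\upsilon(Q)$, so this incidentally records $\upsilon(Q)\subseteq\sigma(Q)$.

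For $(1)\Rightarrow(2)$ I would first extract monotonicity of $f$ from continuity: given $x\leq y$ with $f(x)\not\leq f(y)$, the set $f^{-1}(Q\setminus\da f(y))$ is Scott open, hence an upper set, and contains $x$ but not $y$ --- a contradiction. Then, for $D\in\mathcal D(P)$ with $\vee D$ existing, monotonicity makes $f(D)$ directed and $f(\vee D)$ an upper bound of it; to see it is the \emph{least} upper bound, take any upper bound $q$ of $f(D)$ and suppose $f(\vee D)\not\leq q$. Then $f^{-1}(Q\setminus\da q)$ is a Scott-open neighbourhood of $\vee D$, so by Scott openness it meets $D$, producing some $d\in D$ with $f(d)\not\leq q$, contradicting the choice of $q$. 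Hence $f(\vee D)\leq q$ for every such $q$, that is, $f(\vee D)=\vee f(D)$ (and in particular this supremum exists).

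For $(2)\Rightarrow(1)$ I would again begin with monotonicity, now obtained directly from (2): for $x\leq y$ the pair $\{x,y\}$ is directed with supremum $y$, so $f(y)=\vee\{f(x),f(y)\}$, which forces $f(x)\leq f(y)$. Given a Scott-open $V\subseteq Q$, the preimage $f^{-1}(V)$ is an upper set since $f$ is monotone and $V$ is an upper set; and if $D\in\mathcal D(P)$ has $\vee D$ existing with $\vee D\in f^{-1}(V)$, then $f(D)$ is directed and $\vee f(D)=f(\vee D)\in V$ by (2), so Scott openness of $V$ gives some $d\in D$ with $f(d)\in V$, i.e.\ $D\cap f^{-1}(V)\neq\emptyset$. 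Thus $f^{-1}(V)\in\sigma(P)$ and $f$ is Scott continuous.

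I expect no genuine obstacle here; the only point requiring a little care is in $(1)\Rightarrow(2)$, where $\vee f(D)$ is not postulated to exist in advance, so one must not merely compare $f(\vee D)$ against it but instead verify directly --- as above --- that $f(\vee D)$ satisfies the universal property of the least upper bound of $f(D)$. Everything else is a routine unwinding of the definitions of the Scott topology and of topological continuity.
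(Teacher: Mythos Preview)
Your argument is correct and is the standard proof of this classical result. Note, however, that the paper does not supply its own proof of this lemma: it is stated with a citation to \cite[Proposition II-2.1]{GHKLMS-2003} and used as a background fact, so there is no in-paper argument to compare against. Your write-up would serve perfectly well as a self-contained justification, and the small care you flag --- that in $(1)\Rightarrow(2)$ one must verify the universal property of $f(\vee D)$ rather than presuppose $\vee f(D)$ exists --- is exactly the right point of attention.
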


For a $T_0$-space $X$, we use $\leq_X$ to represent the \emph{specialization order} of $X$, that is, $x\leq_X y$ if{}f $x\in \overline{\{y\}}$. In the following, when a $T_0$-space $X$ is considered as a poset, the order always refers to the specialization order if no other explanation is given. Let $\mathcal O(X)$ (resp., $\Gamma(X)$) be the set of all open subsets (resp., closed subsets) of $X$. Let $X^{(<\omega)}$ be the set of all nonempty finite subsets of $X$, $\mathbf{Fin}~\!\!X=\{\ua F : F\in X^{(<\omega)}\}$ and $\mathbf{Fin}^{op}~\!\!X=\{\downarrow F : F\in X^{(<\omega)}\}$. The closure of a subset $A$ in $X$ will be denoted by $\cl_X A$ (or simply by $\cl A$ if there is no ambiguity) or $\overline{A}$, and the interior of $A$ will be denoted by $\ii_X A$ or simply by $\ii A$. The space $X$ is called a \emph{$d$-space} (or \emph{monotone convergence space}) if $X$ (endowed with the specialization order) is a dcpo and $\mathcal O(X) \subseteq \sigma(X)$ (cf. \cite{GHKLMS-2003, Wyler-1981}).

A topology $\tau$ on a poset $P$ is called \emph{order}-\emph{compatible} if its specialization
order agrees with the original order on $P$, or equivalently, $\upsilon(P)\subseteq \tau\subseteq \alpha(P)$.

%\begin{remark}(\cite{Xu-Shen-Xi-Zhao-2020-2})\label{rem-upper-bound}
%Let $X$ be a $T_0$ space, $C\subseteq X$ and $x\in X$. Then the followings are equivalent:
%\begin{enumerate}[\rm (1)]
%\item $x\in C^{\uparrow}$;
%\item $C\subseteq\downarrow x$;
%\item $\overline{C}\subseteq\downarrow x$;
%\item $x\in \overline{C}^{\uparrow}$.
%\end{enumerate}
%Therefore, $\bigcap_{c\in C}\uparrow c=C^{\uparrow}=\overline{C}^{\uparrow}=\bigcap_{b\in\overline{C}}\uparrow b$, $\bigcap_{c\in C}\downarrow c=C^{\downarrow}=\overline{C}^{\downarrow}=\bigcap_{b\in\overline{C}}\downarrow b$.
%\end{remark}

%\begin{lemma}\emph{(\cite[Lemma 2.1]{Xu-Zhao-2021})}\label{lem-d-space-closed-set-max}
%Let $X$ be a $d$-space. Then for any nonempty closed set $A$ of $X$, $A=\downarrow \mathrm{max}(A)$, and hence $\mathrm{max}(A)\neq\emptyset$.
%\end{lemma}

A nonempty subset $A$ of a $T_0$-space $X$ is said to be \emph{irreducible} if for any $F_1$, $F_2\in\Gamma(X)$, $A\subseteq F_1\cup F_2$ implies $A\subseteq F_1$ or $A\subseteq F_2$. We denote by $\ir(X)$ (resp., $\Gamma(X)$) the set of all irreducible (resp., irreducible closed) subsets of $X$. Clearly, every directed subset of $X$ (with the specialization order) is irreducible and the nonempty irreducible subsets of a poset equipped with the Alexandroff topology are exactly the directed sets of $P$ (cf. \cite[Fact 2.6]{Heckmann-Keimel-2013} or \cite[Lemma 1.2]{Hoffmann-1979}). The space $X$ is called \emph{sober}, if for any $A\in\Gamma(X)$, there is a unique point $x\in X$ such that $A=\overline{\{x\}}$. We say that $X$ is \emph{irreducible complete} if for any $A\in\ir(X)$, $\vee A$ exists in $X$. For a subset $B$ of $X$, it is easy to verify that $\vee B$ exists in $X$ iff $\vee\overline{B}$ exists in $X$, and $\vee B=\vee\overline{B}$ if they exist in $X$. So $X$ is irreducible complete iff $\vee A$ exists in $X$ for all $A\in\Gamma(X)$.

\begin{proposition} (\cite[Proposition 2.9]{Xu-Shen-Xi-Zhao-2020-2}) \label{prop-irr-complete-sober}
For any poset $P$, the space $(P,\upsilon(P))$ is sober iff it is irreducible complete.
\end{proposition}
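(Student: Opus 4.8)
The plan is to read off both implications directly from the concrete description of the upper topology on $P$, making use of two facts recorded just above the statement: that $(P,\upsilon(P))$ is irreducible complete precisely when $\vee A$ exists for every irreducible \emph{closed} $A$, and that $\vee B=\vee\overline B$ whenever either side exists. Throughout I will use that in $(P,\upsilon(P))$ we have $\overline{\{x\}}=\da x$, that the space is $T_0$, and that every closed set is a lower set (the subbasic closed sets $\da y$ are lower sets, and lower sets are closed under finite unions and arbitrary intersections).

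For the forward implication, assume $(P,\upsilon(P))$ is sober and let $A$ be an irreducible closed subset of $P$. Sobriety yields a point $x$ with $A=\overline{\{x\}}=\da x$, so $\vee A=x$ exists in $P$. By the remark preceding the statement, this shows $(P,\upsilon(P))$ is irreducible complete.

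For the converse, assume $(P,\upsilon(P))$ is irreducible complete and let $A$ be irreducible closed; I must exhibit a point $x$ with $A=\overline{\{x\}}=\da x$, and this $x$ is then unique by antisymmetry (equivalently, by $T_0$-ness). Take $x:=\vee A$, which exists by hypothesis. Since $A$ is a lower set and $x$ is an upper bound of $A$, we get $A\subseteq\da x$; so it suffices to show $x\in A$. If $x\notin A$, then $x$ lies in the open set $P\setminus A$, hence in some basic open neighbourhood of the form $P\setminus\bigcup_{i=1}^{n}\da y_i\subseteq P\setminus A$, i.e.\ $A\subseteq\bigcup_{i=1}^{n}\da y_i$. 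By irreducibility of $A$ (and closedness of each $\da y_i$), $A\subseteq\da y_i$ for some $i$; then $y_i$ is an upper bound of $A$, so $x=\vee A\le y_i$, contradicting $x\notin\da y_i$. Hence $x\in A$, and since $A$ is a lower set, $\da x\subseteq A$, so $A=\da x=\overline{\{x\}}$.

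The proof is short and presents no serious obstacle. The one point meriting care is the ``splitting'' step in the converse: it is exactly the finiteness of the basic open cover $P\setminus\bigcup_{i=1}^n\da y_i$ of the neighbourhood of $x$ that allows irreducibility of $A$ to be invoked, reducing $A\subseteq\bigcup_{i=1}^n\da y_i$ to $A\subseteq\da y_i$ for a single $i$ and producing the contradiction with $x=\vee A$.
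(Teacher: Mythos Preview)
Your argument is correct. The forward direction is immediate, and in the converse you correctly exploit the special shape of the closed sets in the upper topology: the basic open neighbourhood of $x=\vee A$ inside $P\setminus A$ has the form $P\setminus\bigcup_{i=1}^{n}\da y_i$, irreducibility of $A$ then pins $A$ under a single $\da y_i$, and $x\le y_i$ gives the contradiction. One tiny point you might make explicit is that $n\ge 1$, which follows since $A\neq\emptyset$ (irreducible sets are nonempty) forces $P\setminus A\neq P$.

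As for comparison: the present paper does not supply its own proof of this proposition; it merely quotes it from \cite[Proposition 2.9]{Xu-Shen-Xi-Zhao-2020-2}. So there is nothing in this paper to compare your argument against. Your proof is the standard one and would serve perfectly well as a self-contained justification here.
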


\begin{corollary} \label{cor-sup-complete-sober}
For a sup-complete poset (especially, a complete lattice) $P$, $(P,\upsilon(P))$ is sober.
\end{corollary}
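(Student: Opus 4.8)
The plan is to deduce the statement directly from Proposition~\ref{prop-irr-complete-sober}. That proposition reduces sobriety of $(P,\upsilon(P))$ to irreducible completeness, so it suffices to check that every irreducible subset of $(P,\upsilon(P))$ has a supremum in $P$. By definition, every member of $\ir(X)$ is a \emph{nonempty} subset of $X$; since $P$ is sup-complete, every nonempty subset of $P$ has a supremum in $P$, in particular every irreducible subset of $(P,\upsilon(P))$ does. Hence $(P,\upsilon(P))$ is irreducible complete, and Proposition~\ref{prop-irr-complete-sober} yields that it is sober. The parenthetical case of a complete lattice is immediate, since a complete lattice is in particular sup-complete.

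The one point I would spell out is the reading of ``$\vee A$ exists in $X$'' in the definition of irreducible completeness: it refers to the specialization order of $X=(P,\upsilon(P))$. Since $\upsilon(P)$ is order-compatible, this specialization order coincides with the original order of $P$, so the supremum furnished by sup-completeness of $P$ is genuinely the supremum in the space. With that observation recorded, there is essentially no obstacle left: all the substantive work is already contained in Proposition~\ref{prop-irr-complete-sober}, whose nontrivial implication (irreducible complete $\Rightarrow$ sober for the upper topology) plays the role, for $\upsilon(P)$, of the classical argument identifying irreducible closed sets with closures of points. Here we only need to note that sup-completeness forces every nonempty --- hence every irreducible --- subset to have a supremum; Lemma~\ref{lem-inf-sup-lowerbound} is not required for this corollary.
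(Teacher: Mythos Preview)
Your proof is correct and follows exactly the route the paper intends: the corollary is stated immediately after Proposition~\ref{prop-irr-complete-sober} with no separate proof, precisely because sup-completeness trivially implies irreducible completeness of $(P,\upsilon(P))$. Your extra remark that the specialization order of $(P,\upsilon(P))$ agrees with the original order of $P$ is a harmless clarification of what is left implicit in the paper.
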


For a topological space $X$, $\mathcal{G}\subseteq 2^{X}$ and $A\subseteq X$, let $\Diamond_{\mathcal{G}}A=\{G\in \mathcal{G}: G\cap A\neq\emptyset\}$ and $\Box_{\mathcal{G}}A=\{G\in \mathcal{G}: G\subseteq A\}$. The sets $\Diamond_{\mathcal{G}}A$ and $\Box_{\mathcal{G}}A$ will be simply written as $\Diamond A$ and $\Box A$ respectively if there is no confusion. The \emph{lower Vietoris topology} on $\mathcal{G}$ is the topology that has $\{\Diamond U: U\in \mathcal{O}(X)\}$ as a subbase, and the resulting space is denoted by $P_{H}(\mathcal{G})$. If $\mathcal{G}\subseteq \ir (X)$, then $\{\Diamond_{\mathcal{G}}U: U\in \mathcal{O}(X)\}$ is a topology on~$\mathcal{G}$. The space $P_H(\Gamma(X)\setminus\{\emptyset\})$ is called the \emph{Hoare power space} or \emph{lower space} of $X$ and is denoted by $P_H(X)$ for short (cf. \cite{Schalk-1993}). The canonical mapping $\eta_X : X\rightarrow P_H(X)$, $x\mapsto \overline{\{x\}}$, is a topological embedding.

Clearly, $P_H(X)=(\Gamma(X) \setminus\{\emptyset\}, \upsilon(\Gamma(X)\setminus\{\emptyset\}))$. So by Corollary \ref{cor-sup-complete-sober} we get the following.

\begin{corollary} \label{cor-Hoare-power-space-sober}
For a $T_0$-space $X$, its Hoare power space $P_H((P,\upsilon(P)))$ is sober.
\end{corollary}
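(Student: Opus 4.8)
The plan is to deduce this directly from Corollary~\ref{cor-sup-complete-sober}, by verifying the two assertions packaged in the displayed identity $P_H(X)=(\Gamma(X)\setminus\{\emptyset\},\upsilon(\Gamma(X)\setminus\{\emptyset\}))$ stated just above the corollary: first, that $\Gamma(X)\setminus\{\emptyset\}$ ordered by inclusion is a sup-complete poset; second, that the lower Vietoris topology on it coincides with its upper topology.

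Sup-completeness is immediate. Given any nonempty family $\{A_i : i\in I\}$ in $\Gamma(X)\setminus\{\emptyset\}$, the union $\bigcup_{i\in I}A_i$ is nonempty, so $\overline{\bigcup_{i\in I}A_i}$ is a nonempty closed subset of $X$; being the smallest closed set containing every $A_i$, it is precisely $\bigvee_{i\in I}A_i$ in $\Gamma(X)\setminus\{\emptyset\}$. Hence every nonempty subset of $\Gamma(X)\setminus\{\emptyset\}$ has a supremum.

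For the topologies I would argue by comparing subbases. The lower Vietoris topology on $\Gamma(X)\setminus\{\emptyset\}$ has subbase $\{\Diamond U : U\in\mathcal O(X)\}$, where $\Diamond U=\{B\in\Gamma(X)\setminus\{\emptyset\}: B\cap U\neq\emptyset\}$. For an open set $U\neq X$ the complement $X\setminus U$ is a nonempty closed set and $B\cap U\neq\emptyset$ iff $B\not\subseteq X\setminus U$; hence $\Diamond U=(\Gamma(X)\setminus\{\emptyset\})\setminus\{B\in\Gamma(X)\setminus\{\emptyset\}: B\subseteq X\setminus U\}$ is the complement of a principal ideal of the poset $\Gamma(X)\setminus\{\emptyset\}$, while for $U=X$ the set $\Diamond U$ is all of $\Gamma(X)\setminus\{\emptyset\}$. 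Conversely, given any $A\in\Gamma(X)\setminus\{\emptyset\}$, set $U=X\setminus A\in\mathcal O(X)$; then $\{B\in\Gamma(X)\setminus\{\emptyset\}: B\not\subseteq A\}=\Diamond U$. So the lower Vietoris subbasic opens and the complements of the principal ideals generate the same topology, that is, $P_H(X)=(\Gamma(X)\setminus\{\emptyset\},\upsilon(\Gamma(X)\setminus\{\emptyset\}))$. Since $\Gamma(X)\setminus\{\emptyset\}$ is sup-complete, Corollary~\ref{cor-sup-complete-sober} yields that $P_H(X)$ is sober.

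I do not expect a genuine obstacle here: the only place demanding a little care is the handling of the degenerate cases (the open set $U=X$, and making sure one never needs to index a principal ideal by the empty closed set) when matching the two subbases; everything else is routine. An alternative route, bypassing the identification with an upper topology, would be to check irreducible completeness of $P_H(X)$ directly and invoke Proposition~\ref{prop-irr-complete-sober}, but the subbase computation above is shorter.
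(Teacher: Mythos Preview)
Your argument is correct and matches the paper's approach exactly: the paper simply records $P_H(X)=(\Gamma(X)\setminus\{\emptyset\},\upsilon(\Gamma(X)\setminus\{\emptyset\}))$ as evident and then invokes Corollary~\ref{cor-sup-complete-sober}, and you have merely supplied the routine details behind that identity (sup-completeness via closures of unions, and the subbase comparison). One small quibble with your closing aside: Proposition~\ref{prop-irr-complete-sober} is itself stated only for spaces of the form $(P,\upsilon(P))$, so invoking it would not actually bypass the identification of the lower Vietoris topology with an upper topology.
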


Rudin's Lemma (see \cite{Rudin-1981}) is a useful tool in non-Hausdorff topology and plays a crucial role in domain theory (see \cite{Abramsky-Jung-1994, GHKLMS-2003, Gierz-Lawson-Stralka-1983, Goubault-2013}). In this paper we will use Jung's version of Rudin's Lemma below (see \cite[Theorem 4.11]{Jung-1989} or \cite[Corollary 3.5]{Heckmann-Keimel-2013}).

\begin{lemma} (Rudin's Lemma) \label{lem-Rudin}
Let $P$ be a poset and $\mathcal{F}\subseteq P^{(<\omega)}$ such that $\{\ua F : F\in\mathcal F\}$ is a filtered family. Then there exists directed set $D\subseteq \mathop{\bigcup}\limits_{F \in \mathcal{F}} F$ such that $D\cap F\neq\emptyset$ for all $F\in \mathcal{F}$.
\end{lemma}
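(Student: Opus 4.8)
The plan is to prove Rudin's Lemma by a Zorn's Lemma argument: first produce an inclusion-minimal ``witness set'', and then read off the directed set from it. Put $Y=\bigcup_{F\in\mathcal F}F$ and let
\[
\mathbb M=\{C\subseteq P : C=\da C\ \text{and}\ C\cap F\neq\emptyset\ \text{for all}\ F\in\mathcal F\}
\]
be the collection of all lower sets of $P$ that meet every member of $\mathcal F$. Since $P\in\mathbb M$, we have $\mathbb M\neq\emptyset$ (and we may assume $\mathcal F\neq\emptyset$, as otherwise the conclusion is vacuously impossible to state).

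First I would check that Zorn's Lemma applies to $(\mathbb M,\subseteq)$ in the ``minimal element'' form, i.e. that every nonempty chain $\{C_i\}_{i\in I}$ in $\mathbb M$ has a lower bound in $\mathbb M$, namely $\bigcap_{i\in I}C_i$. That intersection is again a lower set; and for each fixed $F\in\mathcal F$ the sets $C_i\cap F$ form a chain of nonempty subsets of the \emph{finite} set $F$, so this chain is finite and has a least element $C_{i_0}\cap F\neq\emptyset$, whence $(\bigcap_i C_i)\cap F=\bigcap_i(C_i\cap F)=C_{i_0}\cap F\neq\emptyset$. (Note that only the finiteness of the members of $\mathcal F$ is used here, not the filteredness.) Thus there is an inclusion-minimal $C_0\in\mathbb M$.

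The next step distils the consequence of minimality that I need: for every $a\in C_0$ the set $C_0\setminus\ua a$ is still a lower set and is properly contained in $C_0$, hence is not in $\mathbb M$, so there is $F_a\in\mathcal F$ with $(C_0\setminus\ua a)\cap F_a=\emptyset$, that is, $C_0\cap F_a\subseteq\ua a$. I then claim $D:=C_0\cap Y$ works. Clearly $D\subseteq Y=\bigcup\mathcal F$ and $D\cap F=C_0\cap F\neq\emptyset$ for all $F\in\mathcal F$ (in particular $D\neq\emptyset$). For directedness, let $x,y\in D$; if $x\leq y$ or $y\leq x$ we are done, so assume neither, take $F_x,F_y$ as above, and use the filteredness of $\{\ua F:F\in\mathcal F\}$ to pick $F_3\in\mathcal F$ with $\ua F_3\subseteq\ua F_x\cap\ua F_y$. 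Choosing $z\in C_0\cap F_3$, we have $z\geq f$ for some $f\in F_x$ (since $F_3\subseteq\ua F_3\subseteq\ua F_x$); as $C_0$ is a lower set, $f\in C_0\cap F_x\subseteq\ua x$, so $z\geq x$, and symmetrically $z\geq y$; finally $z\in F_3\subseteq Y$, so $z\in D$ is the required common upper bound.

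The only two steps that are not pure bookkeeping are the chain condition for Zorn's Lemma — which is exactly where finiteness of the members of $\mathcal F$ is essential, a chain of nonempty subsets of a finite set being finite and hence having a nonempty least element — and the directedness verification, which is the one and only place where the filteredness hypothesis is invoked. I expect the conceptual crux to be isolating the property ``$C_0\cap F_a\subseteq\ua a$'' from minimality and then deploying it through filteredness; the remaining items (closure of the relevant sets under $\da$, membership in $\mathbb M$) are routine.
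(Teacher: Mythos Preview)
Your proof is correct and is essentially the standard Zorn's Lemma argument for Rudin's Lemma. Note, however, that the paper does not supply its own proof of this lemma: it merely states it and cites \cite{Jung-1989} and \cite{Heckmann-Keimel-2013}, so there is no in-paper argument to compare against; your write-up matches the classical proof found in those references.
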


\begin{lemma}\label{lem-filtred-family} Suppose that $\{\ua F_d : d\in D\}$ is a filtered family of nonempty upper sets of a poset $P$,  and $A, U\subseteq P$. Then we have the following two conclusions:
\begin{enumerate}[\rm (1)]
\item If $F_d\cap \da A\neq\emptyset$ for any $d\in D$, then $\{\ua (F_d\cap \da A) : d\in D\}$ is filtered.
\item If $F_d\not\subseteq \ua U$ for any $d\in D$, then $\{\ua (F_d\setminus \ua U) : d\in D\}$ is filtered.
\end{enumerate}
\end{lemma}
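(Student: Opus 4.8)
The plan is to treat (1) and (2) in parallel, since both rest on the same mechanism: the filteredness of $\{\ua F_d:d\in D\}$ supplies, for any $d_1,d_2\in D$, an index $d_3\in D$ with $\ua F_{d_3}\subseteq\ua F_{d_1}\cap\ua F_{d_2}$, and it then suffices to push this one inclusion through the set operations $(-)\cap\da A$ and $(-)\setminus\ua U$. First I would note that the new families consist of nonempty upper sets: $F_d\cap\da A\neq\emptyset$ (resp.\ $F_d\setminus\ua U\neq\emptyset$) is precisely the hypothesis of (1) (resp.\ (2)), and $\ua$ of a nonempty set is a nonempty upper set. So the only content to be checked is downward directedness under inclusion.

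Fixing $d_1,d_2$ and choosing $d_3$ as above, I claim
\[\ua(F_{d_3}\cap\da A)\subseteq\ua(F_{d_1}\cap\da A)\cap\ua(F_{d_2}\cap\da A)\]
for (1), and the analogous inclusion with $\setminus\ua U$ in place of $\cap\da A$ for (2); either inclusion exhibits $d_3$ as a lower bound of $\{d_1,d_2\}$ for the new family. To verify it in case (1), take $x$ with $y\leq x$ for some $y\in F_{d_3}\cap\da A$. Since $y\in F_{d_3}\subseteq\ua F_{d_3}\subseteq\ua F_{d_i}$, there is $w_i\in F_{d_i}$ with $w_i\leq y$; as $\da A$ is a lower set and $y\in\da A$, also $w_i\in\da A$, so $w_i\in F_{d_i}\cap\da A$ with $w_i\leq x$, whence $x\in\ua(F_{d_i}\cap\da A)$ for $i=1,2$. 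Case (2) is the same argument, using that $P\setminus\ua U$ is a lower set (the complement of the upper set $\ua U$): from $y\notin\ua U$ and $w_i\leq y$ one gets $w_i\notin\ua U$.

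I do not expect a genuine obstacle; the single point to keep track of is that both conditions, belonging to $\da A$ and lying outside $\ua U$, are downward-hereditary, which is exactly what allows the witness $w_i\in F_{d_i}$ obtained by pulling $y$ back along $\ua F_{d_3}\subseteq\ua F_{d_i}$ to retain the property needed. The lemma is in effect a bookkeeping device recording the two shapes in which Rudin's Lemma (Lemma~\ref{lem-Rudin}) is invoked later in the paper.
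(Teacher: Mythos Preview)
Your argument is correct and uses the same underlying idea as the paper: pull an element of $F_{d_3}$ back to $F_{d_i}$ via $\ua F_{d_3}\subseteq\ua F_{d_i}$, and observe that the relevant condition (membership in $\da A$, resp.\ non-membership in $\ua U$) is downward hereditary.

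The paper packages this slightly differently. For (1) it first isolates the identity $\ua(F\cap\da C)=\ua(\ua F\cap\da C)$ for arbitrary $F,C\subseteq P$, and then notes that $\{\ua(\ua F_d\cap\da A):d\in D\}$ is filtered for trivial monotonicity reasons (intersection with a fixed set and $\ua(-)$ both preserve inclusion). For (2) it does not repeat the computation but simply observes that $F_d\setminus\ua U=F_d\cap\da B$ with $B=P\setminus\ua U$ a lower set, so (2) is an instance of (1). Your parallel treatment is just as valid; the paper's route buys a reusable identity and a one-line reduction of (2) to (1), while yours is more self-contained and makes the element-level mechanism explicit.
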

\begin{proof} (1): First, we show that $\ua (F\cap \da C)=\ua (\ua F\cap \da C)$ for any $F, C\subseteq P$. Obviously, $\ua (F\cap \da C)\subseteq\ua (\ua F\cap \da C)$. Conversely, if $t\in \ua (\ua F\cap \da C)$, then there is $s\in \ua F\cap \da C$ such that $s\leq t$, whence there exists $f\in F$ with $f\leq s$. So $f\in F\cap \da C$ and $f\leq t$. Thus $t\in \ua (F\cap \da C)$. Therefore, $\ua (\ua F\cap \da C)\subseteq \ua (F\cap \da C)$, and hence $\ua (F\cap \da C)=\ua (\ua F\cap \da C)$. As $\{\ua F_d : d\in D\}$ is filtered, $\{\ua (F_d\cap \da A) : d\in D\}=\{\ua (\ua F_d\cap \da A) : d\in D\}$ is also filtered.

(2): Let $B=X\setminus \ua U$. Then $B=\da B$ and $F_d\cap \da B=F_d\setminus B=F_d\cap (X\setminus \ua U)=F_d\setminus \ua U\neq\emptyset$ for any $d\in D$. Hence by (1) we get that $\{\ua (F_d\setminus \ua U) : d\in D\}=\{\ua (F_d\cap B) : d\in D\}$ is filtered.
\end{proof}

%\begin{definition}(\cite{Xu-2025})\label{def-swf}
%A $T_0$-space $X$ is called \emph{strongly well-filtered space}, if for any filtered family $\{K_d:~d\in D\}\subseteq\mathsf K(X)$, $K\in\mathsf K(X)$, and $U\in\mathcal O(X)$, $\bigcap_{d\in D}K_d\cap K\subseteq U$ implies $K_d\cap K\subseteq U$ for some $d\in D$.
%\end{definition}

\section{Strong $d$-spaces and strongly well-filtered spaces}

A subset $A$ of a $T_0$-space $X$ is called \emph{saturated} if $A$ equals the intersection of all open sets containing it (equivalently, $A$ is an upper set in the specialization order). We use $\mathsf{K}(X)$ to denote the set of all nonempty compact saturated subsets of $X$ and endow it with the Smyth order $\sqsubseteq:~K_1\sqsubseteq K_2$ iff $K_{2}\subseteq K_{1}$. The space $X$ is called \emph{well-filtered} if for any filtered family $\mathcal{K}\subseteq \mathord{\mathsf{K}}(X)$ and any open set $U$, $\bigcap\mathcal{K}{\subseteq} U$ implies $K{\subseteq} U$ for some $K{\in}\mathcal{K}$. The space $X$ is said to be \emph{coherent} if the intersection of any two compact saturated sets is compact.

The following implications are well-known (see, e.g., \cite[Theorem II-1.21]{GHKLMS-2003} or \cite[Theorem 6.6 and Corollary 7.12]{Xu-Shen-Xi-Zhao-2020-2}):

\begin{center}
sober spaces $\Rightarrow$ well-filtered spaces $\Rightarrow$ $d$-spaces.
\end{center}

A poset $P$ is said to be \emph{Noetherian} if it satisfies the \emph{ascending chain condition} ($\mathrm{ACC}$ for short): every ascending chain has a greatest member.

\begin{proposition}\label{Alexandroff-topology-sober} (\cite[Proposition 11]{Lawson-Xu-2024-2}).
	For any poset $P$, the following conditions are equivalent:
	\begin{enumerate}[\rm (1)]
		\item $P$ is Noetherian.
        \item $P$ is a dcpo such that every element of $P$ is compact (i.e., $x\ll x$ for all $x\in P$).
		\item $P$ is a dcpo and $\alpha(P)=\sigma(P)$.
       \item $(P,\alpha(P))$ is sober.
		\item $(P,\alpha(P))$ is well-filtered.
		\item $(P,\alpha(P))$ is a $d$-space.
	\end{enumerate}
\end{proposition}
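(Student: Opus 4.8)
The plan is to close the loop of implications $(1)\Rightarrow(2)\Rightarrow(3)\Rightarrow(4)\Rightarrow(5)\Rightarrow(6)\Rightarrow(3)$ together with $(3)\Rightarrow(2)\Rightarrow(1)$; here $(4)\Rightarrow(5)$ and $(5)\Rightarrow(6)$ are simply the standard implications ``sober $\Rightarrow$ well-filtered $\Rightarrow$ $d$-space'' recalled in Section~3, applied to the space $(P,\alpha(P))$, so the real work lies in $(1)\Leftrightarrow(2)\Leftrightarrow(3)$, $(3)\Rightarrow(4)$, and $(6)\Rightarrow(3)$.

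For $(1)\Leftrightarrow(2)$ I would first recall the classical fact that a poset satisfies $\mathrm{ACC}$ if and only if every nonempty subset has a maximal element, together with the elementary observation that a maximal element of a \emph{directed} set is automatically its greatest element. Granting this, if $P$ is Noetherian then every directed set $D$ has a greatest element, so $\vee D=\max D\in D$; hence $P$ is a dcpo, and whenever $x\le\vee D$ we have $x\le\max D\in D$, that is, $x\ll x$. Conversely, if $P$ is a dcpo in which every element is compact and $C$ is an ascending chain, then $C$ is directed, $\vee C$ exists, and $\vee C\ll\vee C$ forces $\vee C\le c$ for some $c\in C$, so $\vee C=c$ is the greatest member of $C$. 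For $(2)\Leftrightarrow(3)$, note that $\sigma(P)\subseteq\alpha(P)$ always, since Scott-open sets are upper sets. If $P$ is a dcpo with all elements compact, then every upper set $U$ is Scott open, because for directed $D$ with $\vee D\in U$ compactness of $\vee D$ gives $\vee D\le d$ for some $d\in D$ and hence $\vee D=d\in D\cap U$; thus $\alpha(P)=\sigma(P)$. Conversely, if $P$ is a dcpo with $\alpha(P)=\sigma(P)$, then each $\ua x$ is Scott open, and testing this against a directed $D$ with $x\le\vee D$ yields $x\le d$ for some $d\in D$, i.e. $x\ll x$.

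For $(3)\Rightarrow(4)$: when $\alpha(P)=\sigma(P)$ the space $(P,\alpha(P))$ is exactly $\Sigma P$. By the fact quoted in Section~2, its nonempty irreducible subsets are precisely the directed subsets of $P$, and since the closed sets of $\alpha(P)$ are the lower sets, the nonempty irreducible closed subsets are precisely the ideals of $P$. Given an ideal $A$, it is a lower set, hence $\alpha$-closed, hence $\sigma$-closed, hence closed under directed sups; as $P$ is a dcpo this forces $\vee A\in A$, so $A=\da(\vee A)=\overline{\{\vee A\}}$, with uniqueness of the generic point immediate from $T_0$-ness. For $(6)\Rightarrow(3)$: the Alexandroff topology is order-compatible, so the specialization order of $(P,\alpha(P))$ is the original order on $P$; the hypothesis that $(P,\alpha(P))$ is a $d$-space then says precisely that $P$ is a dcpo and $\mathcal{O}((P,\alpha(P)))=\alpha(P)\subseteq\sigma(P)$, which together with $\sigma(P)\subseteq\alpha(P)$ gives $\alpha(P)=\sigma(P)$.

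I do not expect any single step to be a serious obstacle; the only place needing some care is $(1)\Leftrightarrow(2)$, where one must pass between the chain-theoretic formulation of the ascending chain condition and the behaviour of arbitrary directed sets. This rests on the (choice-dependent) equivalence of $\mathrm{ACC}$ with ``every nonempty subset has a maximal element'' and on the remark that directedness turns a maximal element into a maximum. Everything else is routine bookkeeping with the Scott and Alexandroff topologies and the already-recorded implications among sober, well-filtered and $d$-spaces.
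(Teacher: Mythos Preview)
Your proof is correct. The paper itself does not prove this proposition at all; it merely quotes it from \cite[Proposition 11]{Lawson-Xu-2024-2}, so there is no in-paper argument to compare against. Your cycle $(1)\Leftrightarrow(2)\Leftrightarrow(3)$ together with $(3)\Rightarrow(4)\Rightarrow(5)\Rightarrow(6)\Rightarrow(3)$ is clean, and each step checks out: the passage $(1)\Leftrightarrow(2)$ via ``ACC $\Leftrightarrow$ every nonempty subset has a maximal element'' plus ``maximal in a directed set $=$ greatest'' is the standard route; $(2)\Leftrightarrow(3)$ is the straightforward unwinding of Scott-openness of $\ua x$; $(3)\Rightarrow(4)$ correctly invokes the fact (recorded in Section~2 of the paper) that the irreducible sets for $\alpha(P)$ are exactly the directed sets, so irreducible closed sets are ideals, and Scott-closedness forces $\vee A\in A$; and $(6)\Rightarrow(3)$ is immediate from the definition of $d$-space once one notes that the specialization order of $(P,\alpha(P))$ recovers the original order.
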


As a strengthened version of well-filtered spaces (resp., $d$-spaces), the following two notions were introduced in \cite{Xu-2025, Xu-Zhao-2020}.

\begin{definition}(\cite[Definition 4.1]{Xu-2025}) \label{def-strong-WF} A $T_0$-space $X$ is called \emph{strongly well}-\emph{filtered} if for any filtered family $\{K_d : d\in D\}\subseteq \mathsf{K}(X)$, $K\in \mathsf{K}(X)$ and $U\in \mathcal O(X)$, $\bigcap_{d\in D}K_d\cap K\subseteq U$ implies $K_d\cap K\subseteq U$ for some $d\in D$.
\end{definition}

\begin{definition}\label{strong $d$-space} (\cite[Definition 3.18]{Xu-Zhao-2020}) A $T_0$-space $X$ is called a \emph{strong} $d$-\emph{space} if for any $D\in \mathcal D(X)$, $x\in X$ and $U\in \mathcal O(X)$, $\bigcap_{d\in D}\ua d\cap \ua x\subseteq U$ implies $\ua d\cap \ua x\subseteq U$ for some $d\in D$.
\end{definition}

\begin{proposition} (\cite[Proposition 4.3]{Xu-2025}) \label{prop-WF-coherent-strongly-WF} \emph{(1)} Every strongly well-filtered space is well-filtered.

\noindent \emph{(2)} Every strongly well-filtered space is a strong $d$-space.

\noindent \emph{(3)} Every coherent well-filtered space is strongly well-filtered.

\noindent \emph{(4)} Every $T_2$-space is coherent and sober and hence strongly well-filtered.
\end{proposition}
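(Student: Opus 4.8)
The plan is to verify the four parts directly from the definitions, reducing each of (1)--(3) to a single application of strong well-filteredness or well-filteredness, and treating (4) by a classical separation argument; no new machinery is needed. For (1), suppose $\{K_d:d\in D\}\subseteq\mathsf{K}(X)$ is a filtered family and $\bigcap_{d\in D}K_d\subseteq U$ for some $U\in\mathcal O(X)$. Fix any $d_0\in D$ and put $K=K_{d_0}\in\mathsf{K}(X)$; since $\bigcap_{d\in D}K_d\subseteq K_{d_0}=K$, the hypothesis becomes $\bigcap_{d\in D}K_d\cap K\subseteq U$, so strong well-filteredness yields $d\in D$ with $K_d\cap K_{d_0}\subseteq U$, and then filteredness of the family provides $e\in D$ with $K_e\subseteq K_d\cap K_{d_0}\subseteq U$, which is what well-filteredness requires.

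For (2), let $D\in\mathcal D(X)$, $x\in X$ and $U\in\mathcal O(X)$ with $\bigcap_{d\in D}\ua d\cap\ua x\subseteq U$. Each $\ua d$ is the saturation of the finite (hence compact) set $\{d\}$, so $\ua d\in\mathsf{K}(X)$, and likewise $\ua x\in\mathsf{K}(X)$; directedness of $D$ makes $\{\ua d:d\in D\}$ a filtered family in the Smyth order. Applying strong well-filteredness with $K=\ua x$ gives some $d\in D$ with $\ua d\cap\ua x\subseteq U$, which is exactly the strong $d$-space condition.

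For (3), let $X$ be coherent and well-filtered, and take a filtered family $\{K_d:d\in D\}\subseteq\mathsf{K}(X)$, $K\in\mathsf{K}(X)$ and $U\in\mathcal O(X)$ with $\bigcap_{d\in D}K_d\cap K\subseteq U$. If $K_d\cap K=\emptyset$ for some $d$, then trivially $K_d\cap K\subseteq U$. Otherwise, by coherence each $K_d\cap K$ is a nonempty compact saturated set, so $\{K_d\cap K:d\in D\}\subseteq\mathsf{K}(X)$, and this family is again filtered in the Smyth order: whenever $K_{d_3}\subseteq K_{d_1}\cap K_{d_2}$ one has $K_{d_3}\cap K\subseteq(K_{d_1}\cap K)\cap(K_{d_2}\cap K)$. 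Since $\bigcap_{d\in D}(K_d\cap K)=\big(\bigcap_{d\in D}K_d\big)\cap K\subseteq U$, well-filteredness yields $d\in D$ with $K_d\cap K\subseteq U$.

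For (4), a $T_2$-space $X$ is $T_1$, so its specialization order is trivial, every subset is saturated, and compact subsets are closed; hence the intersection of two compact sets, being a closed subset of a compact set, is compact, so $X$ is coherent. For sobriety one runs the standard argument: an irreducible closed set $A$ cannot contain distinct points $x\neq y$, since disjoint opens $U\ni x$ and $V\ni y$ would give $A\subseteq(X\setminus U)\cup(X\setminus V)$ with both parts closed and $A$ contained in neither, contradicting irreducibility; thus $A=\overline{\{x\}}$ for a unique $x$. As sober spaces are well-filtered, part (3) then upgrades coherence plus well-filteredness to strong well-filteredness. The whole proposition is mostly bookkeeping; the only step needing real care is (3), where one must peel off the empty-intersection case before invoking well-filteredness and must use coherence precisely to keep the auxiliary family $\{K_d\cap K:d\in D\}$ inside $\mathsf{K}(X)$, so that well-filteredness applies at all.
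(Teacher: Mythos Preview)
The paper does not supply its own proof of this proposition; it is simply quoted from \cite[Proposition 4.3]{Xu-2025}. Your argument is correct in all four parts, and the strategy you use for (3) and (4) coincides with what the paper does for the parallel result Proposition~\ref{prop-T2-is-strong-R-space} on strong R-spaces.
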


\begin{theorem} (\cite[Theorem 4.8]{Xu-2025}) \label{theor-d-space-intersection-strongly-WF}
Let $X$ be a $d$-space such that $\downarrow(A\cap K)$ is closed for any nonempty closed set $A$ and $K\in\mathsf K(X)$. Then $X$ is strongly well-filtered.
\end{theorem}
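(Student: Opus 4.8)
The plan is to prove strong well-filteredness by a minimal-closed-set (Zorn's Lemma) argument in the style of the Hofmann--Mislove Theorem, with the $d$-space and closedness hypotheses playing the role that sobriety plays there. So let $\{K_d : d\in D\}\subseteq\mathsf{K}(X)$ be a filtered family, $K\in\mathsf{K}(X)$, $U\in\mathcal O(X)$ with $\bigl(\bigcap_{d\in D}K_d\bigr)\cap K\subseteq U$, and suppose for contradiction that $K_d\cap K\not\subseteq U$ for every $d$. The first thing I would record uses that $K$ and every $K_d$ are \emph{saturated} (upper sets): since $U$ is an upper set, whenever $x\in K_d$ and $x\le q$ with $q\in K\setminus U$ we get $q\in K_d$; hence $K_d\cap K\subseteq U$ iff $K_d\cap\downarrow(K\setminus U)=\emptyset$, and the same computation gives $\bigl(\bigcap_d K_d\bigr)\cap\downarrow(K\setminus U)=\emptyset$. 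By the hypothesis applied to the nonempty closed set $X\setminus U$ and to $K$, the set $C:=\downarrow(K\setminus U)=\downarrow((X\setminus U)\cap K)$ is closed, and it is nonempty since $K\not\subseteq U$. So it suffices to produce $q\in K\setminus U$ lying in $\bigcap_d K_d$.

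Next I would apply Zorn's Lemma to $\mathcal A:=\{A\in\Gamma(X): \emptyset\ne A\subseteq C\text{ and }A\cap K_d\ne\emptyset\text{ for all }d\in D\}$, ordered by reverse inclusion. We have $C\in\mathcal A$ because $C\cap K_d\supseteq(K\setminus U)\cap K_d=(K_d\cap K)\setminus U\ne\emptyset$, and a chain in $\mathcal A$ has its intersection in $\mathcal A$: this intersection is closed and contained in $C$, and for each $d$ it meets $K_d$ since $\{A\cap K_d: A\text{ in the chain}\}$ is a descending chain of nonempty \emph{closed} subsets of the \emph{compact} space $K_d$. Let $A_0$ be minimal in $\mathcal A$. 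As in Hofmann--Mislove, $A_0$ is irreducible: from $A_0=B_1\cup B_2$ with the $B_i$ proper closed, minimality produces $d_1,d_2$ with $B_i\cap K_{d_i}=\emptyset$, and choosing $d_3$ with $K_{d_3}\subseteq K_{d_1}\cap K_{d_2}$ gives $A_0\cap K_{d_3}=\emptyset$, impossible. Moreover, for each $d$ the set $\downarrow(A_0\cap K_d)$ is closed by the hypothesis (with $A=A_0$ and the compact saturated set $K_d$), is contained in $A_0\subseteq C$, is nonempty, and meets every $K_e$ (take $d'$ with $K_{d'}\subseteq K_d\cap K_e$); hence it lies in $\mathcal A$, and minimality forces $A_0=\downarrow(A_0\cap K_d)$ for every $d$.

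To conclude I would find a generic point of $A_0$. Fix $d$: then $A_0\cap K_d$ is closed in the compact space $K_d$, hence compact, and it is irreducible because its closure in $X$ equals $A_0$. The crux is the auxiliary fact that \emph{a compact irreducible subset of a $T_0$-space is directed in the specialization order} --- this is where I expect the real work to lie, and where Rudin's Lemma is the natural tool. Granting it, $A_0\cap K_d$ is directed; since $X$ is a dcpo its join $x_d$ exists, since $X$ is a $d$-space $A_0$ is Scott-closed so $x_d\in A_0$, and since $K_d$ is an upper set lying above some point of $A_0\cap K_d$ we get $x_d\in K_d$. Hence $x_d$ is the greatest element of $A_0\cap K_d$, so $A_0=\downarrow(A_0\cap K_d)=\downarrow x_d=\overline{\{x_d\}}$. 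By $T_0$-ness $x_d$ is independent of $d$; calling it $x_0$ we get $x_0\in\bigcap_d K_d$, and from $x_0\in A_0\subseteq\downarrow(K\setminus U)$ there is $q\in K\setminus U$ with $x_0\le q$; as each $K_d$ is an upper set, $q\in\bigcap_d K_d$, so $q\in\bigl(\bigcap_d K_d\bigr)\cap K\subseteq U$ --- contradicting $q\notin U$. Thus $X$ is strongly well-filtered. In this scheme the $d$-space hypothesis is used only to know that $X$ is a dcpo and that closed sets are Scott-closed, while the closedness hypothesis is used exactly twice: to make $C$ closed and to make each $\downarrow(A_0\cap K_d)$ closed.
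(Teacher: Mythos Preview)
The paper does not itself prove this theorem --- it is quoted from \cite{Xu-2025} without argument --- so there is nothing to compare against; I comment only on the soundness of your proposal. Your Zorn set-up, the irreducibility of $A_0$, and the identity $A_0=\downarrow(A_0\cap K_d)$ are all fine. The genuine gap is exactly your ``crux'': the assertion that \emph{every compact irreducible subset of a $T_0$-space is directed} is false. An infinite set with the cofinite topology is $T_1$ (so its specialization order is discrete), compact, and irreducible, yet no two distinct points have a common upper bound. Rudin's Lemma extracts a directed selector from a filtered family of \emph{finite} sets; it does not turn an arbitrary compact irreducible set into a directed one. Hence ``granting it'' grants a false statement, and your tally that the closedness hypothesis is used ``exactly twice'' is where the argument actually breaks.

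The repair is a \emph{third} application of the closedness hypothesis, now with the compact saturated set $\uparrow x$ for $x\in A_0$: then $B_x:=\downarrow(A_0\cap\uparrow x)$ is closed, and from $A_0=\downarrow(A_0\cap K_d)$ one gets $A_0\cap\uparrow x\cap K_d\neq\emptyset$ for every $d$, so $B_x\in\mathcal A$ and minimality forces $B_x=A_0$. This says precisely that $A_0$ (not $A_0\cap K_d$) is directed. The $d$-space hypothesis then gives $a_0:=\vee A_0\in A_0$, hence $A_0=\downarrow a_0$; from $A_0=\downarrow(A_0\cap K_d)$ it follows that $a_0\in K_d$ for all $d$, and from $a_0\in C=\downarrow(K\setminus U)$ there is $q\in K\setminus U$ with $a_0\leq q$, whence $q\in\bigl(\bigcap_d K_d\bigr)\cap K\setminus U$ --- the required contradiction. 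So your architecture is sound, but the endgame must show that the \emph{minimal closed set} $A_0$ is directed via the hypothesis, not that $A_0\cap K_d$ is directed via a (false) general lemma.
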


\begin{proposition} (\cite[Proposition 4.3]{Xu-2025}) \label{prop-complete-lattice-strongly-WF}
For a complete lattice $L$, $(L, \upsilon(L))$ is strongly well-filtered.
\end{proposition}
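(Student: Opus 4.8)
The plan is to reduce to Proposition~\ref{prop-WF-coherent-strongly-WF}(3): it is enough to show that $(L,\upsilon(L))$ is both well-filtered and coherent. Well-filteredness is free, since by Corollary~\ref{cor-sup-complete-sober} the space $(L,\upsilon(L))$ is sober and every sober space is well-filtered. So the real content is coherence, and for that I would first pin down the compact saturated subsets of $(L,\upsilon(L))$.

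The family $\{L\setminus{\da}x : x\in L\}$ is a subbase for $\upsilon(L)$, so by the Alexander subbase lemma a set $Q\subseteq L$ is compact in $(L,\upsilon(L))$ iff every cover of $Q$ by these subbasic open sets has a finite subcover. A subbasic cover has the form $\{L\setminus{\da}x : x\in S\}$ for some $S\subseteq L$, and since $L$ is a complete lattice $\bigcap_{x\in S}{\da}x={\da}\!\left(\bigwedge S\right)$; thus the cover indexed by $S$ covers $Q$ exactly when $Q\cap{\da}\!\left(\bigwedge S\right)=\emptyset$, and likewise for finite $S_0\subseteq S$. Hence $Q$ is compact iff for every $S\subseteq L$ with $Q\cap{\da}\!\left(\bigwedge S_0\right)\neq\emptyset$ for all finite $S_0\subseteq S$ one also has $Q\cap{\da}\!\left(\bigwedge S\right)\neq\emptyset$. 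If moreover $Q$ is saturated, i.e.\ $Q={\ua}Q$, then $Q\cap{\da}t\neq\emptyset$ is equivalent to $t\in Q$, and $\{\bigwedge S_0 : \emptyset\neq S_0\subseteq S\ \text{finite}\}$ is a downward directed set with infimum $\bigwedge S$. Putting these together, $Q$ is a nonempty compact saturated subset of $(L,\upsilon(L))$ precisely when $Q$ is a nonempty upper set closed under infima of its downward directed subsets --- equivalently, precisely when $Q$ is a nonempty Scott-closed subset of $\Sigma(L^{op})$.

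Coherence is then immediate: given $K_1,K_2\in\mathsf K((L,\upsilon(L)))$, both are Scott-closed in $\Sigma(L^{op})$, hence so is $K_1\cap K_2$ (Scott-closed sets are closed under arbitrary intersections), and therefore $K_1\cap K_2$ is again compact saturated --- in any case compact --- in $(L,\upsilon(L))$. Thus $(L,\upsilon(L))$ is a coherent well-filtered space, and Proposition~\ref{prop-WF-coherent-strongly-WF}(3) gives that it is strongly well-filtered. (One could alternatively feed the same description of $\mathsf K((L,\upsilon(L)))$ into Theorem~\ref{theor-d-space-intersection-strongly-WF} by checking that ${\da}(A\cap K)$ is closed, but the coherence route is shorter.)

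I expect the characterization of $\mathsf K((L,\upsilon(L)))$ to be the only step that needs care, and within it the single essential use of completeness is the identity $\bigcap_{x\in S}{\da}x={\da}(\bigwedge S)$, which collapses an arbitrary subbasic open cover into a statement about one meet; for a general poset $\bigcap_{x\in S}{\da}x$ need not be a principal ideal and the argument fails, which is exactly why the hypothesis is that $L$ be a complete lattice. A minor point to dispatch along the way is the degenerate case $S=\emptyset$ (an empty cover, vacuous unless $Q=\emptyset$), which is harmless.
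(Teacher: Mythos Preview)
Your argument is correct. The paper itself does not prove Proposition~\ref{prop-complete-lattice-strongly-WF} directly --- it is quoted from \cite{Xu-2025} --- but the paper later establishes the stronger sup-complete version by two routes, and your approach coincides with the second of these (Remark~\ref{rem-also-get-main-results}): sober by Corollary~\ref{cor-sup-complete-sober}, coherent, hence strongly well-filtered via Proposition~\ref{prop-WF-coherent-strongly-WF}(3) (the paper phrases it through strong R-spaces using Proposition~\ref{prop-T2-is-strong-R-space}(2), but that is the same mechanism).

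The genuine difference is in how coherence is obtained. You first characterize $\mathsf K((L,\upsilon(L)))$ as the nonempty Scott-closed subsets of $\Sigma(L^{op})$ and then observe closure under intersection; the paper's Proposition~\ref{prop-sup-complete-coherent} instead applies the Alexander Subbase Theorem directly to $K_1\cap K_2$ with a case split on whether $\bigcap_{i\in I}\da x_i$ is empty. Your route is cleaner and more conceptual for complete lattices, since the identity $\bigcap_{x\in S}\da x=\da(\bigwedge S)$ holds unconditionally. The paper's argument trades that elegance for slightly greater generality: in a sup-complete poset $\bigwedge S$ need not exist, so the paper handles the empty-intersection case separately and invokes Lemma~\ref{lem-inf-sup-lowerbound} only when a lower bound is present. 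Your characterization does not survive that weakening as stated, which is why you (correctly) flag completeness as the essential hypothesis in your final paragraph. One small omission: when concluding $K_1\cap K_2$ is compact you should note it is nonempty (it contains $\top$), since your characterization is stated only for nonempty sets --- though of course the empty set is compact anyway.
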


\section{Main results}

\begin{definition}  (\cite[Definition 30 and Remark 38]{Lawson-Xu-2024-2}) \label{def-R-space} A $T_0$-space $X$ is called an \emph{R}-\emph{space} if for any nonempty family $\{\mathord{\uparrow}F_{i}:i\in I\}\subseteq \mathbf{Fin}~\!\! X$ and any $U\in \mathcal O(X)$, $\bigcap_{i\in I}\mathord{\uparrow}F_{i}\subseteq U$ implies  $\bigcap_{i\in I_{0}}\mathord{\uparrow}F_{i}\subseteq U$ for some $I_{0}\in I^{(<\omega)}$.
\end{definition}

The R-spaces were first introduced in \cite[Definition 10.2.11]{Xu-2016-2} (see also \cite{Wen-Xu-2018}), and they have been systematically studied in \cite{Lawson-Xu-2024-2, Xu-2016-2}.

As a strengthened version of R-spaces, we introduce the following notion.

\begin{definition}\label{def-strong-R-space} A $T_0$-space $X$ is called a \emph{strong} \emph{R}-\emph{space} if for any nonempty family $\{K_{i} : i\in I\}\subseteq \mathsf{K}(X)$ and any $U\in \mathcal O(X)$, $\bigcap_{i\in I}K_i\subseteq U$ implies  $\bigcap_{i\in I_{0}}K_i\subseteq U$ for some $I_{0}\in I^{(<\omega)}$.
\end{definition}

Clearly, every strong R-space is an R-space and the Sierpi\'{n}ski space $\Sigma 2$ is a strong R-space but not $T_1$.

\begin{proposition}\label{prop-T2-is-strong-R-space}  \noindent \emph{(1)} Every strong R-space is strongly well-filtered.

\noindent \emph{(2)} Every coherent well-filtered space is a strong R-space.

\noindent \emph{(3)} Every $T_2$-space is a coherent sober space and hence a strong R-space.
\end{proposition}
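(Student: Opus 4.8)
The three claims are closely related, so the plan is to prove them in a way that lets later parts reuse earlier ones. For (1), I would argue directly from the definition of a strong R-space. Suppose $\{K_d : d\in D\}\subseteq \mathsf{K}(X)$ is a filtered family, $K\in\mathsf{K}(X)$, $U\in\mathcal O(X)$, and $\bigcap_{d\in D}K_d\cap K\subseteq U$. Consider the family $\mathcal K=\{K\}\cup\{K_d : d\in D\}\subseteq\mathsf K(X)$; its total intersection is $\bigcap_{d\in D}K_d\cap K$, which lies in $U$. Since $X$ is a strong R-space, there is a finite subfamily whose intersection is already contained in $U$, i.e. $K\cap K_{d_1}\cap\cdots\cap K_{d_n}\subseteq U$ for finitely many $d_1,\dots,d_n\in D$ (we may always throw $K$ into the finite subfamily). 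Because $\{K_d : d\in D\}$ is filtered in the Smyth order, there is $d\in D$ with $K_d\subseteq K_{d_i}$ for all $i$, hence $K\cap K_d\subseteq K\cap K_{d_1}\cap\cdots\cap K_{d_n}\subseteq U$. This is exactly the conclusion of Definition~\ref{def-strong-WF}, so $X$ is strongly well-filtered.

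For (2), let $X$ be coherent and well-filtered, let $\{K_i : i\in I\}\subseteq\mathsf K(X)$ be nonempty, let $U\in\mathcal O(X)$, and suppose $\bigcap_{i\in I}K_i\subseteq U$. Fix some $i_0\in I$. For each finite $I_0\in I^{(<\omega)}$ containing $i_0$, coherence gives that $K_{I_0}:=\bigcap_{i\in I_0}K_i$ is compact; it is also saturated (an intersection of upper sets), and it is nonempty or empty --- if some $K_{I_0}=\emptyset$ we are done immediately since $\emptyset\subseteq U$, so assume all $K_{I_0}\neq\emptyset$. The family $\{K_{I_0} : I_0\in I^{(<\omega)},\ i_0\in I_0\}$ is then a filtered family in $\mathsf K(X)$: given two such finite sets, their union is a finite set containing $i_0$ whose associated intersection is below both. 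Its total intersection is $\bigcap_{i\in I}K_i\subseteq U$, so by well-filteredness some $K_{I_0}\subseteq U$, which is the required conclusion. Thus $X$ is a strong R-space.

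Part (3) then follows: every $T_2$-space is coherent (the intersection of two compact sets is closed in a compact set, hence compact) and sober, and sober spaces are well-filtered by the chain of implications recalled before Proposition~\ref{Alexandroff-topology-sober}; hence by (2) a $T_2$-space is a strong R-space. The only mild subtlety to handle carefully is the bookkeeping with the empty intersection in (2) and the need to close the finite subfamilies under the inclusion of a fixed index to guarantee the filtered property --- but this is routine. I do not expect any genuine obstacle here; the content is entirely in the definitions of strong R-space, coherence, and (strong) well-filteredness, together with the standard fact that a directed/filtered family lets one collapse a finite subfamily to a single member.
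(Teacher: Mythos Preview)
Your proposal is correct and follows essentially the same route as the paper: in (1) you adjoin $K$ to the filtered family and use the strong R-property followed by filteredness to collapse the finite subfamily, in (2) you pass to the filtered family of finite intersections (handling the empty case separately) and invoke well-filteredness, and in (3) you combine coherence and sobriety of $T_2$-spaces with (2). The only cosmetic difference is that you fix an index $i_0$ in (2) to force filteredness, whereas the paper simply uses all of $I^{(<\omega)}$; this is harmless but unnecessary, since $I^{(<\omega)}$ is already directed under inclusion.
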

\begin{proof} (1): Suppose that $\{K_d : d\in D\}\subseteq \mathsf{K}(X)$ is a filtered family, $K\in \mathsf{K}(X)$ and $U\in \mathcal O(X)$ satisfying $\bigcap_{d\in D}K_d\cap K\subseteq U$. Select $i\not\in D$ and let $K_i=K$. Setting $I=D\cup\{i\}$, we conclude there exists a finite subset $I_0$ of $I$ such that $\bigcap\limits_{j\in I_0}K_j\subseteq U$. As $\{K_d : d\in D\}$ is filtered, there is $d_0\in D$ such that $K_{d_0}\subseteq \bigcap\limits_{d\in I_0\setminus \{i\}}K_d$. Then $K_{d_0}\cap K\subseteq \bigcap\limits_{j\in I_0}K_j\subseteq U$. Hence $X$ is strong well-filtered.

(2): Assume that $X$ is a coherent well-filtered space. Let $\{K_i : i\in I\}\subseteq \mathsf{K}(X)$ be a nonempty family and $U\in \mathcal O(X)$ satisfying $\bigcap_{i\in I} K_i\subseteq U$. If there is $I_0\in I^{(<\omega)}$ with $\bigcap\limits_{i\in I_0}K_i=\emptyset$, then $\bigcap\limits_{i\in I_0}K_i\subseteq U$. Now we assume that $\bigcap\limits_{i\in J}K_i\neq \emptyset$ for all $J\in I^{(<\omega)}$. Then by the coherence of $X$, $\{\bigcap\limits_{i\in J}K_i : J\in I^{(<\omega)}\}\subseteq \mathsf{K}(X)$ is a filtered family, and $\bigcap\limits_{J\in I^{(<\omega)}}\bigcap\limits_{i\in J}K_i=\bigcap_{i\in I} K_i\subseteq U$. AS $X$ is well-filtered, there is $I_0\in I^{(<\omega)}$ such that $\bigcap\limits_{i\in I_0}K_i\subseteq U$. Thus $X$ is strong R-space.

(3): Let $X$ be a $T_2$-space. Then it is easy to verify that $X$ is sober (see, e.g., \cite[Proposition 8.2.12]{Goubault-2013}) and hence well-filtered. As $X$ is $T_2$, $\mathsf{K}(X)\subseteq \Gamma(X)$ (see \cite[Theorem 3.1.8]{Engelking-1989}). For any $K_1, K_2\in   \mathsf{K}(X)$, $K_1\cap K_2$, as the intersection of a compact subset $K_1$ and a closed subset $K_2$, is a compact set of $X$. So $X$ is coherent. Therefore, by (2) $X$ is a strong R-space.
\end{proof}

The following example shows that a coherent well-filtered space may not be sober, and hence a strong R-space may not be sober.

\begin{example}\label{exam-X-coc-strongly-WF-not-sober}
	Let $X$ be a uncountably infinite set and $X_{coc}$ the space equipped with \emph{the co-countable topology} (the empty set and the complements of countable subsets of $X$ are open). Then
\begin{enumerate}[\rm (a)]
    \item $\mathcal C(X_{coc})=\{\emptyset, X\}\cup X^{(\leqslant\omega)}$ and $X_{coc}$ is $T_1$.

\item $\ir_c(X_{coc})=\{X\}\cup \{\{x\} : x\in X\}$ and hence $X_{coc}$ is non-sober.

        \item $\mk (X_{coc})=X^{(<\omega)}\setminus \{\emptyset\}$.

        Clearly, every finite subset is compact. Conversely, if $C\subseteq X$ is infinite, then $C$ has an infinite countable subset $\{c_n : n\in\mn\}$. Let $C_0=\{c_n : n\in\mn\}$ and $U_m=(X\setminus C_0)\cup \{c_m\}$ for each $m\in \mn$. Then $\{U_n : n\in\mn\}$ is an open cover of $C$, but has no finite subcover. Hence $C$ is not compact. Thus $\mk (X_{coc})=X^{(<\omega)}\setminus \{\emptyset\}$.

    \item $X_{coc}$ is well-filtered and coherent.

       By (c), $X_{coc}$ is coherent. Suppose that $\{F_d : d\in D\}\subseteq \mk (X_{coc})$ is a filtered family and $U\in \mathcal O(X_{coc})$ with $\bigcap_{d\in D}F_d\subseteq U$. As $\{F_d : d\in D\}$ is filtered and all $F_d$ are finite, $\{F_d : d\in D\}$ has a least element $F_{d_0}$, and hence $F_{d_0}=\bigcap_{d\in D}F_d\subseteq U$. Thus $X_{coc}$ is well-filtered.

      \item $X_{coc}$ is a strong R-space and hence a strong well-filtered space by (d) and Proposition \ref{prop-T2-is-strong-R-space}.
\end{enumerate}
\end{example}

Figure 1 shows certain relations of some spaces lying between $d$-spaces and $T_2$-spaces (cf. \cite[Figure 1]{Lawson-Xu-2024-2}).

\begin{figure}[ht]
	\centering
	\includegraphics[height=3.0cm,width=10.0cm]{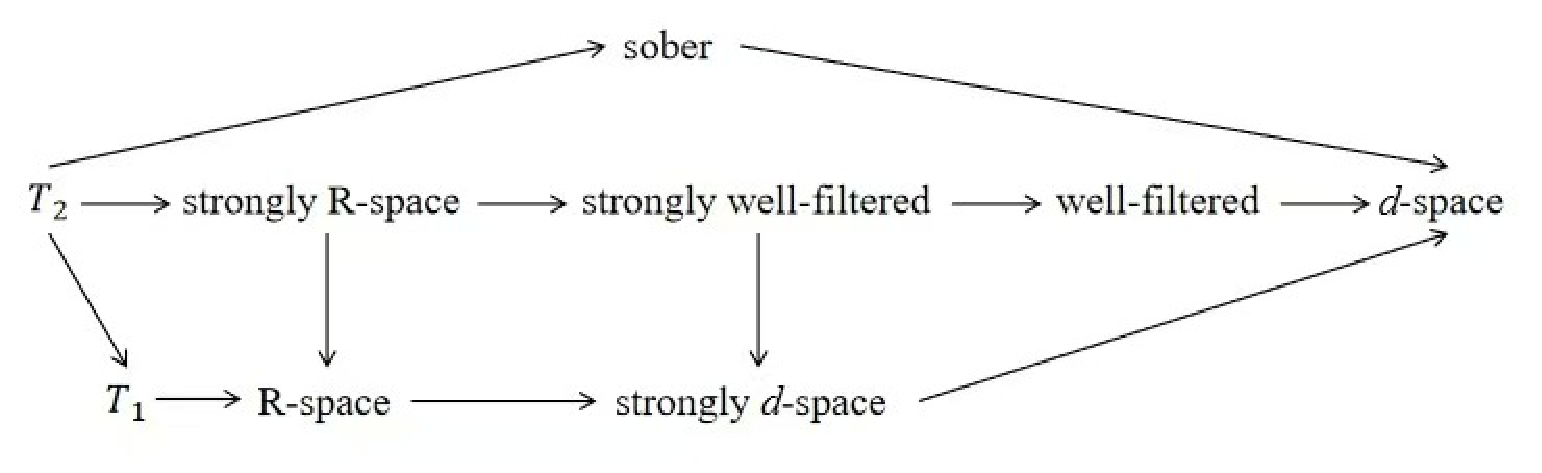}
	\caption{Relations of some spaces lying between $d$-spaces and $T_2$-spaces}
\end{figure}

\begin{theorem}\label{theor-compact-bigcap}
 For be a sup-complete poset $P$, $(P, \upsilon(P))$ is a strong R-space and hence a strongly well-filtered space.
\end{theorem}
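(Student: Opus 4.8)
The plan is to prove directly that $(P,\upsilon(P))$ is a strong R-space; strong well-filteredness will then follow from Proposition~\ref{prop-T2-is-strong-R-space}(1). So suppose $\{K_i : i\in I\}\subseteq \mathsf{K}((P,\upsilon(P)))$ is a nonempty family and $U\in\upsilon(P)$ with $\bigcap_{i\in I}K_i\subseteq U$; we must produce a finite $I_0\subseteq I$ with $\bigcap_{i\in I_0}K_i\subseteq U$. Arguing by contradiction, assume $\bigcap_{i\in J}K_i\not\subseteq U$ for every $J\in I^{(<\omega)}$; in particular each finite intersection is nonempty. The first key point is to understand compact saturated sets in the upper topology: since the subbasic closed sets are the principal ideals $\downarrow x$, a saturated set is an upper set, and I expect that each $K_i$, being compact saturated in $\upsilon(P)$, has the form $K_i = \uparrow\min K_i$ with the set of minimal elements controlling it, or at least that $K_i$ is an intersection of sets of the form $\uparrow F$ with $F$ finite (this is the standard ``$\mathsf{K}$ of the upper topology'' description, and is where the finite-set combinatorics of R-spaces enters). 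More usefully, I would use that for each finite $J$, $\bigcap_{i\in J}K_i$ is a nonempty saturated set whose complement misses $U$, i.e.\ $\bigcap_{i\in J}K_i \setminus U \neq\emptyset$.

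Next I would set up a filtered family to which Rudin's Lemma (Lemma~\ref{lem-Rudin}) or Lemma~\ref{lem-filtred-family} can be applied. Replacing each $K_i$ by a suitable approximating finite-generated upper set $\uparrow F$, consider the collection of sets $\uparrow F_J \setminus U$ (or $\uparrow F_J \cap (P\setminus U)$) indexed by finite $J\subseteq I$, where $F_J$ is a finite set with $\uparrow F_J \supseteq \bigcap_{i\in J}K_i$ chosen compatibly; by Lemma~\ref{lem-filtred-family}(2) this is a filtered family of nonempty upper sets of finite sets, so Rudin's Lemma yields a directed set $D$ meeting every $F_J\setminus U$ and contained in their union. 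Since $P$ is sup-complete it is a dcpo, so $\vee D$ exists; and since $P\setminus U = \downarrow x_0$ or more generally $P\setminus U$ is closed in $\upsilon(P)$, hence (being a finite... or arbitrary intersection of principal ideals) is closed under existing directed sups in the relevant sense — here I would invoke that $\upsilon(P)\subseteq\sigma(P)$ is \emph{false} in general, so I must instead argue that $\overline{D}$ in $\upsilon(P)$ is irreducible, apply Corollary~\ref{cor-sup-complete-sober} (soberness of $(P,\upsilon(P))$) to get $\overline{D}=\downarrow(\vee D)$, and then show $\vee D\in K_i$ for every $i\in I$ (because $D$ eventually sits inside every $K_i$ through the filtered structure and $K_i$ is saturated, i.e.\ an upper set, and... actually I need $\vee D$ to lie in each $K_i$, which requires that $K_i$ contains its directed sups — true when $K_i=\uparrow F$ trivially since $\vee D\geq$ some element of $D\cap F\subseteq K_i$). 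Hence $\vee D\in\bigcap_{i\in I}K_i\subseteq U$, while $\vee D\in\overline{D}\subseteq P\setminus U$ since $D\subseteq P\setminus U$ and $P\setminus U$ is closed — contradiction.

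\textbf{Main obstacle.} The delicate step is the passage from the $K_i$ to finite generating sets and controlling the infima/intersections: unlike the R-space situation where the $\uparrow F_i$ are literally finitely generated, here $K_i$ is merely compact saturated in $\upsilon(P)$, and I need that $\bigcap_{i\in J}K_i$ is itself well-behaved (nonempty, and approximable by $\uparrow(\text{finite})$) — this is exactly where Theorem~\ref{theor-d-space-intersection-strongly-WF} or the structure of $\mathsf{K}$ in $\upsilon(P)$ must be used, possibly via Lemma~\ref{lem-inf-sup-lowerbound} to express relevant infima as sups of lower-bound sets. The cleanest route may actually be to show directly that in a sup-complete $P$ every $K\in\mathsf{K}((P,\upsilon(P)))$ equals $\uparrow(\min K)$ with $\min K$ finite, reducing the whole statement to the R-space theorem of Lawson--Xu; if that reduction works, the proof is short, and if it doesn't (because $\min K$ need not be finite), the Rudin's-Lemma-plus-soberness argument above is the fallback, with the filtered-family bookkeeping being the part requiring care.
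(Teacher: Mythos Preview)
There is a genuine gap, located exactly where you flag it. Your first route---showing that every $K\in\mathsf K((P,\upsilon(P)))$ has the form $\uparrow F$ with $F$ finite---fails: take $L$ to be an infinite antichain $\{a_n:n\in\mathbb N\}$ with a top and bottom adjoined; the nonempty $\upsilon$-opens are exactly $\{\top\}\cup C$ with $C$ cofinite in the antichain, so every upper set of $L$ is $\upsilon$-compact, and in particular $\{\top\}\cup\{a_{2n}:n\in\mathbb N\}$ is compact saturated but not finitely generated. Your fallback sketch also breaks down: the finite sets $F_J$ with $\uparrow F_J\supseteq\bigcap_{i\in J}K_i$ ``chosen compatibly'' are never constructed, and even granting them, the containment points the wrong way---elements of $F_J$ need not lie in any $K_i$, so the directed set $D$ produced by Rudin's Lemma has no reason to meet each $K_i$, and the conclusion $\vee D\in\bigcap_i K_i$ is unjustified.

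The paper's proof works from the opposite side: it never dissects the $K_i$, but instead dissects the closed set $P\setminus U$. By the very definition of $\upsilon(P)$ one has $P\setminus U=\bigcap_{j\in J}\downarrow F_j$ with each $F_j$ finite; sup-completeness and Lemma~\ref{lem-inf-sup-lowerbound} let one rewrite every finite subintersection $\bigcap_{j\in S}\downarrow F_j$ as $\downarrow F_S$ for a single finite $F_S$, so $\{\downarrow F_S:S\in J^{(<\omega)}\}\subseteq\mathbf{Fin}^{op}P$ is filtered. Rudin's Lemma is then applied in the \emph{dual} poset $P^{op}$, yielding a filtered set $E\subseteq P$ that avoids $W:=\bigcup_i(P\setminus K_i)$ and meets every $F_S\setminus W$. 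Compactness of the $K_i$ enters only as an open-cover statement against the directed $\upsilon$-open family $\{P\setminus\downarrow e:e\in E\}$: whether $\bigcap_{e\in E}\downarrow e=\emptyset$ or $\wedge E$ exists, one is forced to place some $e\in E$ into $W$, a contradiction. Hence some $\downarrow F_{S_0}\subseteq W$, and finiteness of $F_{S_0}$ immediately gives the finite $I_0\subseteq I$. The finite-set combinatorics you were seeking lives in the structure of the $\upsilon$-closed set $P\setminus U$, not in the compact sets; and Rudin's Lemma is applied dually, producing a filtered set whose infimum, not supremum, is the pivot. (The paper also notes, in Remark~\ref{rem-also-get-main-results}, the shorter alternative of first proving $(P,\upsilon(P))$ coherent---Proposition~\ref{prop-sup-complete-coherent}---and then invoking Proposition~\ref{prop-T2-is-strong-R-space}(2) together with Corollary~\ref{cor-sup-complete-sober}.)
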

\begin{proof}
Suppose that $\{K_i:~i\in I\}\subseteq\mathsf{K}((P,~\upsilon(P)))\setminus\{\emptyset\}$ and $U\in\upsilon(P)$ satisfying $\bigcap\limits_{i\in I}K_i\subseteq U$. We will show that there exists $I_0\in I^{(<\omega)}$ such that $\bigcap\limits_{i\in I_0}K_i\subseteq U$.

{\bf Case 1:} $U=P$.

Then for any $i\in I$, we have $K_i\subseteq U$.

{\bf Case 2:} $U\neq P$.

As $P$ is sup-complete, it has a greatest element $\top=\vee P$. For any $i\in I$, as $\emptyset\neq K_i=\uparrow K_i$, we have $\top\in K_i$. Then $\top\in \bigcap\limits_{i\in I}K_i\subseteq U$, whence $U\neq\emptyset$. Therefore, there exists a nonempty family $\{F_j:~j\in J\}\subseteq P^{(<\omega)}$ such that $U=\bigcup\limits_{j\in J}(P\setminus\downarrow F_j)=P\setminus\bigcap\limits_{j\in J}\downarrow F_j$. As $P\neq U$, we have $\bigcap\limits_{j\in J}\downarrow F_j\neq\emptyset$. Hence for any $S\in P^{(<\omega)}$, $\emptyset \neq \bigcap\limits_{j\in S}\downarrow F_j=\bigcup\limits_{\varphi\in \prod\limits_{j\in S}F_j}\bigcap\limits_{j\in S}\downarrow \varphi (j)$. Let $\Phi_S=\{\varphi\in \prod\limits_{j\in S}F_j : \bigcap\limits_{j\in S}\downarrow \varphi(j)\neq \emptyset\}$. Then $\Phi_S\neq \emptyset$, and for any $\varphi \in \Phi_S$, by Lemma \ref{lem-inf-sup-lowerbound} we get that $\bigwedge\limits_{j\in S}\varphi(j)$ exists in $P$, whence $\bigcap\limits_{j\in S}\downarrow \varphi(j)=\downarrow \bigwedge\limits_{j\in S}\varphi(j)=\downarrow \wedge \varphi(S)$. For $S\in P^{(<\omega)}$, let $F_S=\{\wedge\varphi(S) : \varphi\in \Phi_S\}$. Then $\emptyset\neq F_S\in P^{(<\omega)}$ and $\bigcap_{j\in S}\downarrow F_j=\downarrow F_S$. So $\{\downarrow F_S : \varphi\in J^{(<\omega)}\}\subseteq \mathbf{Fin}^{op}~\!P$ is a filtered family, and $\bigcap\limits_{S\in J^{(<\omega)}}\downarrow F_S=\bigcap\limits_{j\in J} \downarrow F_j= P\setminus U \subseteq P\setminus \bigcap\limits_{i\in I}K_i=\bigcup\limits_{i\in I} (P\setminus K_i)$. Let $W=\bigcup\limits_{i\in I} (P\setminus K_i)$. Then $W$ is a nonempty lower subset of $P$.

Now we claim that there is $S_0\in J^{(<\omega)}$ such that $\downarrow F_{S_0}\subseteq W$. Assume, on the contrary, that $\downarrow F_S\not\subseteq W$ for any $S\in J^{(<\omega)}$. Then by Lemma \ref{lem-filtred-family} (applying it to the dual poset $P^{op}$), $\{\downarrow (F_S\setminus W) : S\in J^{(<\omega)}\}\subseteq \mathbf{Fin}^{op}~\!P$ is filtered. By Lemma \ref{lem-Rudin} (also applying it to the dual poset $P^{op}$), there is a filtered subset $E\subseteq \bigcup\limits_{S\in J^{(<\omega)}}(F_S\setminus W)$ such that $E\cap (F_S\setminus W)\neq\emptyset$ for all $S\in J^{(<\omega)}$. If $\bigcap\limits_{e\in E}\downarrow e=\emptyset$, then $K_i\subseteq P\setminus \bigcap\limits_{e\in E}\downarrow e=\bigcup\limits_{e\in E}(P\setminus \downarrow e)$ for any $i\in I$. Hence $\{P\setminus \downarrow e : e\in E\}\subseteq \upsilon(P)$ is a directed open cover of $K_i$. Therefore, there is $e\in E$ such that $K_i\subseteq P\setminus \downarrow e$, and hence $e\in P\setminus K_i\subseteq W$, which is a contradiction with $E\subseteq \bigcup\limits_{S\in J^{(<\omega)}}(F_S\setminus W)\subseteq P\setminus W$. Thus $\bigcap\limits_{e\in E}\downarrow e\neq\emptyset$. By Lemma \ref{lem-inf-sup-lowerbound}, $\wedge E$ exists in $P$. So $\downarrow \wedge E=\bigcap\limits_{e\in E}\downarrow e\subseteq \bigcap\limits_{S\in J^{(<\omega)}}\downarrow F_S\subseteq W=\bigcup\limits_{i\in I}(P\setminus K_i)$. Then there is $i_0\in I$ such that $\downarrow \wedge E\subseteq P\setminus K_{i_0}$, and hence  $K_{i_0}\subseteq \bigcup\limits_{e\in E}$. As $K_{i_0}\in \mathsf{K}((P, \upsilon(P)))$ and $E$ is filtered, there is $e\in E$ such that $K_{i_0}\subseteq P\setminus \downarrow e$, whence $e\in P\setminus K_{i_0}\subseteq W$, which is in contradiction with $E\subseteq P\setminus W$. Thus there is $S_0\in J^{(<\omega)}$ such that $\bigcap\limits_{j\in S_0}\downarrow F_j=\downarrow F_{S_0}\subseteq W=\bigcup\limits_{i\in I}(P\setminus K_i)$. As $F_{S_0}$ is finite, there is $I_0\in I^{(<\omega)}$ such that $\downarrow F_{S_0}\subseteq \bigcup\limits_{i\in I_0}(P\setminus K_i)=P\setminus\bigcap\limits_{i\in I_0} K_i$, and hence $\bigcap\limits_{i\in I_0} K_i\subseteq P\setminus \downarrow F_{S_0}=P\setminus \bigcap\limits_{j\in S_0}\downarrow F_j=\bigcup\limits_{j\in S_0}(P\setminus \downarrow F_j)\subseteq U$. This completes the proof that $(P, \upsilon(P))$ is a strong R-space.
\end{proof}

For a $T_0$-space $X$, $\Gamma(X)\setminus \{\emptyset\}$ is clearly a sup-complete poset and $P_H(X)=(\Gamma(X) \setminus\{\emptyset\}, \upsilon(\Gamma(X)\setminus\{\emptyset\}))$. So by Theorem \ref{theor-compact-bigcap} we get the following.

\begin{theorem}\label{cor-Hower-swf}
For a $T_0$-space $X$, its Hoare power space $P_H(X)$ is a strong R-space and hence a strongly well-filtered space.
\end{theorem}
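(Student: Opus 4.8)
The plan is to deduce the statement immediately from Theorem \ref{theor-compact-bigcap}, once we make explicit the identification recorded in the sentence preceding the theorem. First I would order $\Gamma(X)\setminus\{\emptyset\}$ by inclusion and note that this is exactly the specialization order of $P_H(X)$: indeed $A\in\overline{\{B\}}$ in $P_H(X)$ holds iff every subbasic open set $\Diamond U$ ($U\in\mathcal O(X)$) containing $A$ also contains $B$, and, taking $U=X\setminus B$, this is equivalent to $A\subseteq B$. Next I would check that $\Gamma(X)\setminus\{\emptyset\}$ is sup-complete: for any nonempty family $\{A_i:i\in I\}$ of nonempty closed subsets of $X$, the set $\overline{\bigcup_{i\in I}A_i}$ is a nonempty closed set, and it is the least closed set containing every $A_i$, so $\bigvee_{i\in I}A_i=\overline{\bigcup_{i\in I}A_i}$ exists in $\Gamma(X)\setminus\{\emptyset\}$.

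The second step is to verify that the lower Vietoris topology on $\Gamma(X)\setminus\{\emptyset\}$ coincides with the upper topology $\upsilon(\Gamma(X)\setminus\{\emptyset\})$ of this poset, i.e.\ that $P_H(X)=(\Gamma(X)\setminus\{\emptyset\},\upsilon(\Gamma(X)\setminus\{\emptyset\}))$. For a nonempty closed set $C$, the principal ideal $\da C=\{A\in\Gamma(X)\setminus\{\emptyset\}:A\subseteq C\}$ has complement $\{A:A\cap(X\setminus C)\neq\emptyset\}=\Diamond(X\setminus C)$; conversely, for $U\in\mathcal O(X)$ with $U\neq X$ the set $\Diamond U$ is the complement of the principal ideal $\da(X\setminus U)$, while $\Diamond X$ is the whole space. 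Hence the subbasic open sets of the lower Vietoris topology are precisely the complements of the principal ideals of the poset $\Gamma(X)\setminus\{\emptyset\}$ together with the whole space, and the two topologies agree.

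Finally, applying Theorem \ref{theor-compact-bigcap} to the sup-complete poset $P=\Gamma(X)\setminus\{\emptyset\}$ gives at once that $P_H(X)=(P,\upsilon(P))$ is a strong R-space, and Proposition \ref{prop-T2-is-strong-R-space}(1) then yields that it is strongly well-filtered. I do not expect a genuine obstacle here: the entire combinatorial heart of the matter --- the Rudin's-Lemma argument carried out in the dual poset --- has already been absorbed into the proof of Theorem \ref{theor-compact-bigcap}, and what remains is only the bookkeeping identification of the two descriptions of $P_H(X)$ carried out in the first two steps.
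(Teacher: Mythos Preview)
Your proposal is correct and follows exactly the paper's approach: the paper already records in Section~2 that $P_H(X)=(\Gamma(X)\setminus\{\emptyset\},\upsilon(\Gamma(X)\setminus\{\emptyset\}))$ and that $\Gamma(X)\setminus\{\emptyset\}$ is sup-complete, and then derives Theorem~\ref{cor-Hower-swf} as an immediate corollary of Theorem~\ref{theor-compact-bigcap}. You simply spell out the verification of these two facts, which the paper takes as clear.
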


\begin{proposition}\label{prop-sup-complete-coherent}
For a sup-complete poset $P$, $(P, \upsilon(P))$ is coherent.
\end{proposition}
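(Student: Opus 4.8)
The plan is to fix $K_{1},K_{2}\in\mathsf{K}((P,\upsilon(P)))$ and prove that $K_{1}\cap K_{2}$ is compact; it is automatically saturated, being an intersection of upper sets, and it is nonempty because, $P$ being sup-complete, $\top=\vee P$ lies in every nonempty upper set and hence in $K_{1}\cap K_{2}$. Set $W=P\setminus(K_{1}\cap K_{2})=(P\setminus K_{1})\cup(P\setminus K_{2})$, a lower set. Since the sets $P\setminus\downarrow F$ with $F\in P^{(<\omega)}$ form a base of $\upsilon(P)$, a routine refinement argument reduces the compactness of $K_{1}\cap K_{2}$ to the following statement: for every family $\{F_{j}:j\in J\}\subseteq P^{(<\omega)}$ with $\bigcap_{j\in J}\downarrow F_{j}\subseteq W$ there is a finite $S_{0}\subseteq J$ with $\bigcap_{j\in S_{0}}\downarrow F_{j}\subseteq W$ (equivalently, with $K_{1}\cap K_{2}\subseteq\bigcup_{j\in S_{0}}(P\setminus\downarrow F_{j})$, a finite subcover).

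This statement is precisely of the type established inside the proof of Theorem~\ref{theor-compact-bigcap} (there $W=\bigcup_{i\in I}(P\setminus K_{i})$; here $W$ is a union of just two such complements), so I would simply rerun that argument. If some finite $S$ already satisfies $\bigcap_{j\in S}\downarrow F_{j}\subseteq W$ we are done. Otherwise, distributing the unions $\downarrow F_{j}=\bigcup_{f\in F_{j}}\downarrow f$ and invoking Lemma~\ref{lem-inf-sup-lowerbound} to rewrite each nonempty finite sub-intersection as a principal ideal, one obtains for each $S\in J^{(<\omega)}$ a finite set $F_{S}$ with $\bigcap_{j\in S}\downarrow F_{j}=\downarrow F_{S}$, so that $\{\downarrow F_{S}:S\in J^{(<\omega)}\}\subseteq\mathbf{Fin}^{op}P$ is filtered with $\bigcap_{S}\downarrow F_{S}=\bigcap_{j\in J}\downarrow F_{j}\subseteq W$. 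Assuming $\downarrow F_{S}\not\subseteq W$ (equivalently $F_{S}\cap(K_{1}\cap K_{2})\neq\emptyset$) for all $S$, Lemma~\ref{lem-filtred-family}(1) and Rudin's Lemma (Lemma~\ref{lem-Rudin}), both applied to $P^{op}$, yield a filtered set $E\subseteq\bigcup_{S}(F_{S}\cap(K_{1}\cap K_{2}))\subseteq K_{1}\cap K_{2}$ meeting every $F_{S}$. Then either $\bigcap_{e\in E}\downarrow e=\emptyset$, in which case $\{P\setminus\downarrow e:e\in E\}$ is a directed open cover of the compact $K_{1}$, forcing $K_{1}\subseteq P\setminus\downarrow e_{0}$ for some $e_{0}\in E\subseteq K_{1}$ — absurd; or $\wedge E$ exists by Lemma~\ref{lem-inf-sup-lowerbound}, lies in $\bigcap_{S}\downarrow F_{S}\subseteq W$, hence misses $K_{n}$ for some $n\in\{1,2\}$, and since $K_{n}$ is an upper set $K_{n}\subseteq P\setminus\downarrow\wedge E=\bigcup_{e\in E}(P\setminus\downarrow e)$, whence the same compactness contradiction recurs with $K_{n}$ in place of $K_{1}$. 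Therefore some finite $S_{0}$ has $\downarrow F_{S_{0}}\subseteq W$, as required.

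The argument introduces no genuinely new ingredient beyond the proof of Theorem~\ref{theor-compact-bigcap}; the only mild subtlety, and the main obstacle, is the bookkeeping that replaces $\bigcap_{j\in S}\downarrow F_{j}$ by a principal-ideal form and keeps the dualized applications of Lemma~\ref{lem-filtred-family} and Rudin's Lemma straight. I note that if one is willing to invoke Alexander's subbase lemma the reduction becomes even cleaner: it then suffices to treat covers $\{P\setminus\downarrow x_{j}:j\in J\}$ by subbasic opens, in which case each finite intersection $\bigcap_{j\in S}\downarrow x_{j}$ is already a principal ideal $\downarrow a_{S}$ with $a_{S}=\bigwedge\{x_{j}:j\in S\}$ (Lemma~\ref{lem-inf-sup-lowerbound}, once all finite intersections are nonempty), the set $\{a_{S}:S\in J^{(<\omega)}\}$ is filtered outright, and Rudin's Lemma is not needed at all — one uses only Lemma~\ref{lem-inf-sup-lowerbound} and the compactness of $K_{1}$ and $K_{2}$ exactly as above.
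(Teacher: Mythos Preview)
Your argument is correct. The main route you take---rerunning the Rudin's-Lemma machinery of Theorem~\ref{theor-compact-bigcap} with $W=(P\setminus K_{1})\cup(P\setminus K_{2})$---works exactly as you say, but it is considerably heavier than what the paper does. The paper proceeds precisely along the lines of your final remark: it invokes the Alexander Subbase Theorem and considers only subbasic covers $\{P\setminus\downarrow x_{i}:i\in I\}$. At that point, however, the paper is even more direct than your sketch: it does not form any filtered family $\{a_{S}\}$ of finite infima at all, but simply looks at the \emph{full} intersection $\bigcap_{i\in I}\downarrow x_{i}$. If this is empty, $K_{1}$ is already covered and compactness of $K_{1}$ finishes it. If not, Lemma~\ref{lem-inf-sup-lowerbound} gives a single point $\bigwedge_{i\in I}x_{i}$ whose principal ideal equals the intersection and lies in $W$; being a principal ideal, it lies entirely in $P\setminus K_{1}$ or entirely in $P\setminus K_{2}$, and compactness of the corresponding $K_{n}$ yields the finite subcover. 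So your Rudin-based proof and even your Alexander sketch are valid but over-engineered: once one restricts to subbasic opens, the entire filtered/Rudin layer evaporates and only Lemma~\ref{lem-inf-sup-lowerbound} plus compactness of a single $K_{n}$ are needed.
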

\begin{proof}
Suppose $K_1, K_2\in\mathsf{K}((P, \upsilon(P)))$. We will show that $K_1\cap K_2\in\mathsf{K}((P, \upsilon(P)))$. As $P$ is sup-complete, it has a greatest element $\top=\vee P$. Clearly, $\top\in K_1\cap K_2$. Hence $K_1\cap K_2$ is a nonempty saturated subset of $P$.

$\mathbf{Case~1:}$ $K_1=P$ or $K_2=P$.

Then $K_1\cap K_2=K_2\in\mathsf{K}((P, \upsilon(P)))$ or $K_1\cap K_2=K_1\in\mathsf{K}((P, \upsilon(P)))$.

$\mathbf{Case~2:}$ $K_1\neq P\neq K_2$.

We use Alexander Subbase Theorem (cf. \cite[Problem 3.12.2]{Engelking-1989}) to prove that $K_1\cap K_2$ is compact in $(P, \upsilon(P))$. Let $\{P\setminus\downarrow x_i: i\in I\}$ be an open cover of $K_1\cap K_2$ in $(P, \upsilon(P))$, that is, $K_1\cap K_2\subseteq\bigcup\limits_{i\in I}(P\setminus\downarrow x_i)=$ $P\setminus\bigcap\limits_{i\in I}\downarrow x_i$. Then we need to show that there exists $I_0\in I^{(<\omega)}$ such that $K_1\cap K_2\subseteq\bigcup\limits_{i\in I_0}(P\setminus\downarrow x_i)$.

(C1) $\bigcap_{i\in I}\downarrow x_i=\emptyset$.

 Then $K_1\subseteq P=P\setminus\bigcap\limits_{i\in I}\downarrow x_i=\bigcup\limits_{i\in I}(P\setminus \downarrow x_i)$. By $K_1\in\mathsf{K} ((P, \upsilon(P)))$, there is $I_0\in I^{(<\omega)}$ such that $K_1\subseteq\bigcup\limits_{i\in I_0}(P\setminus\downarrow x_i)$, whence $K_1\cap K_2\subseteq\bigcup\limits_{i\in I_0}P\setminus\downarrow x_i$.

(C2) $\bigcap_{i\in I}\downarrow x_i\neq\emptyset$.

 Then by Lemma \ref{lem-inf-sup-lowerbound}, $\bigwedge\limits_{i\in I}x_i$ exists in $P$, whence $\downarrow\wedge_{i\in I} x_i=\bigcap_{i\in I}\downarrow x_i$. So $K_1\cap K_2\subseteq P\setminus\bigcap\limits_{i\in I}\downarrow x_i=P\setminus\downarrow\bigwedge\limits_{i\in I} x_i$ or, equivalently, $\downarrow\bigwedge\limits_{i\in I} x_i\subseteq P\setminus(K_1\cap K_2)=(P\setminus K_1)\cup (P\setminus K_2)$. Without loss of generality, we can assume $\downarrow\bigwedge\limits_{i\in I} x_i\subseteq P\setminus K_1$. Then $K_1\subseteq P\setminus \subseteq\bigcup\limits_{i\in I}(P\setminus\downarrow x_i)$. By the compactness of $K_1$, there exists an $I_0\in I^{(<\omega)}$ such that $K_1\subseteq\bigcup\limits_{i\in I_0}(P\setminus\downarrow x_i)$, whence $K_1\cap K_2\subseteq\bigcup\limits_{i\in I_0}(P\setminus\downarrow x_i)$. By Alexander Subbase Theorem, we have $K_1\cap K_2\in\mathsf{K}((P, \upsilon(P)))$. Thus $(P, \upsilon(P))$ is coherent.
\end{proof}

\begin{remark}\label{rem-also-get-main-results} By Corollary \ref{cor-sup-complete-sober}, Proposition \ref{prop-T2-is-strong-R-space} and Proposition \ref{prop-sup-complete-coherent}, we also obtain Theorem \ref{theor-compact-bigcap} and Theorem \ref{cor-Hower-swf}.
\end{remark}

Finally, we give three examples to illustrate that when $P$ is a dcpo but not sup-complete, $(P, \upsilon(P))$ may not be well-filtered (and hence not strongly well-filtered) or may not be coherent.

\begin{example}\label{exam-X-cof-strong-d-space-not-WF}
	Let $X$ be a countably infinite set and $X_{cof}$ the space equipped with the \emph{co-finite topology} (the empty set and the complements of finite subsets of $X$ are open). Then
\begin{enumerate}[\rm (a)]
\item $X$ (with the discrete order) is a dcpo (in fact, it is a countable Noetherian dcpo).
\item $(X, \upsilon(X))=X_{cof}$.
    \item $\mathcal C(X_{cof})=\{\emptyset, X\}\cup X^{(<\omega)}$, $X_{cof}$ is $T_1$ and hence a strong $d$-space.
    \item $\mk (X_{cof})=2^X\setminus \{\emptyset\}$.
    \item  $\mathcal O(X_{cof})$ is countable, whence $X_{cof}$ is second-countable.
    \item $\mk (X_{cof})=2^X\setminus \{\emptyset\}$. So $X_{cof}$ is locally compact.
\item $X_{cof}$ is not well-filtered.

Let $\mathcal K=\{X\setminus F: F\in X^{(<\omega)}\}$. Then $\mathcal K$ is a filtered family of saturated compact subsets of $X_{cof}$ and $\bigcap \mathcal K=\emptyset$, but $X\setminus F\neq\emptyset$ for every $ F\in X^{(<\omega)}$. Thus $X_{cof}$ is not well-filtered.
\end{enumerate}
\end{example}

\begin{example}\label{exam-Johnstone-not-supsemlattice-not-swf} (Johnstone's dcpo)
Let $\mathbb{J}=\mathbb{N}^{+}\times (\mathbb{N}^{+}\cup \{\infty\})$ with ordering defined by $(j, k)\leq (n, m)$ if{}f $j = n$ and $k \leq m$, or $m =\infty$ and $k\leq n$ (see Figure 2). $\mathbb{J}$ is a well-known dcpo constructed by Johnstone in \cite{Johnstone-1981}, which is the first given dcpo whose Scott space is non-sober. Clearly, $\mathbb{J}_{max}=\{(n, \infty) : n\in\mathbb{N}^{+} \}$ is the set of all maximal elements of $\mathbb{J}$.

\begin{figure}[ht]
	\centering
	\includegraphics[height=4.5cm,width=5.5cm]{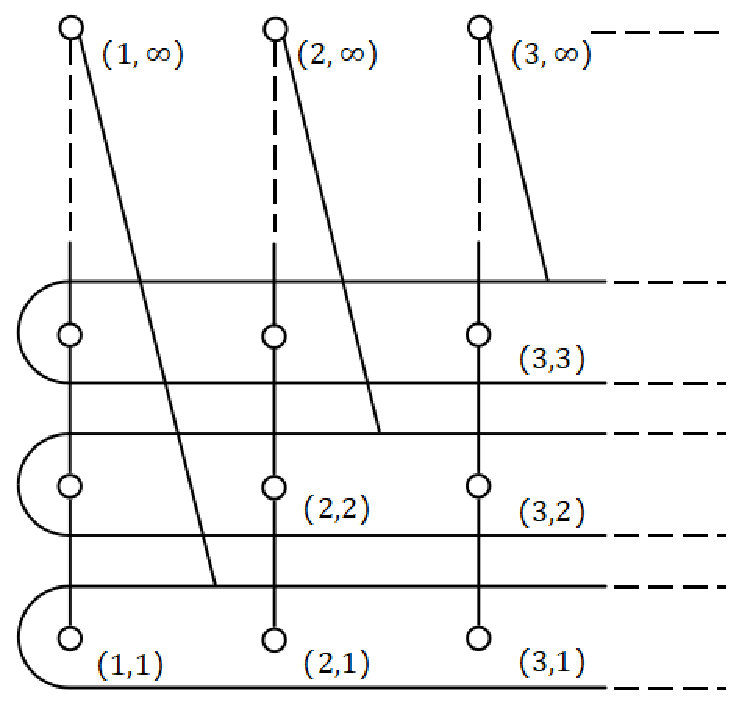}
	\caption{Johnstone's dcpo $\mathbb{J}$}
\end{figure}

%For any $n\in \mathbb{N}^{+}$, we denote by $\mathcal \mathbb{J}_n$ the set $\{(n, k) : k\in \mathbb{N}^{+}\cup \{\infty\}\}$. %Let $\mathcal S_{\omega}=\{A\subseteq \mathbb{J} : |\{n\in \mathbb{N}^{+} : A\cap \mathbb{J}_n\neq\emptyset\}|=\omega\}$.

The following four conclusions about $\Sigma~\!\!\mathbb{J}$ are known (see, for example, \cite[Example 3.1]{Lu-Li-2017} and \cite[Lemma 3.1]{Miao-Li-Zhao-2021}):
\begin{enumerate}[\rm (i)]
\item $\mathcal{D}(\mathbb{J})=\{D\subseteq \mathbb{J} : D \mbox{~has a largest element or there is~} n\in\mathbb{N}^{+} \mbox{~such that~} |D|=\omega \mbox{~and~}D\subseteq \{n\}\times \mathbb{N}^{+}\}$. So $\mathbb{J}$ is a dcpo.
\item $\ir_c (\Sigma~\!\!\mathbb{J})=\{\overline{\{x\}}=\da x : x\in \mathbb{J}\}\cup \{\mathbb{J}\}$.
\item $\mathsf{K}(\Sigma~\!\!\mathbb{J})=(2^{\mathbb{J}_{max}} \setminus \{\emptyset\})\bigcup \mathbf{Fin}~\!\mathbb{J}$.
\item $\Sigma~\!\!\mathbb{J}$ is not well-filtered and hence non-sober (cf. \cite[Exercise 8.3.9]{Goubault-2013}).

%Let $\mathcal G=\{\mathbb{J}\setminus F : F\in (\mathbb{J}_{max})^{(<\omega)}\}$. Then by (ii), $\mathcal G\subseteq \mathsf{K}(\Sigma~\!\!\mathbb{J})$ is a filtered family and $\bigcap\mathcal{G}=\bigcap_{F\in (\mathbb{J}_{max})^{(<\omega)}} (\mathbb{J}\setminus F)=\mathbb{J}_{max}\setminus \bigcup (\mathbb{J}_{max})^{(<\omega)}=\emptyset$, but $\mathbb{J}\setminus F=\emptyset$ for no $F\in (\mathbb{J}_{max})^{(<\omega)}$. Hence $\Sigma~\!\!\mathbb{J}$ is not well-filtered.
\end{enumerate}

Considering the space $(\mathbb{J}, \upsilon(\mathbb{J}))$, we have the following conclusions.

\begin{enumerate}[\rm (a)]
\item $(\mathbb{J}, \upsilon(\mathbb{J}))$ is not irreducible complete and hence $\mathbb J$ is not sup-complete.

By (ii) we have $\mathbb{J}\in \ir_c(\Sigma~\!\!\mathbb{J})\subseteq \ir ((\mathbb{J}, \upsilon(\mathbb{J}))$. As $\mathbb{J}$ has no greatest element, $\vee \mathbb{J}$ does not exist in $\mathbb{J}$. Thus $(\mathbb{J}, \upsilon(\mathbb{J}))$ is not irreducible complete, and hence $\mathbb J$ is not sup-complete.
%Suppose that $A\in \mathcal S_{\omega}(\mathbb{J})$. We show that for any  $C_1, C_2\in \mathcal{C}((\mathbb{J}, \upsilon(\mathbb{J})))$, $A\subseteq C_1\cup C_2$ implies $A\subseteq C_1$ or $A\subseteq C_2$. Assume, on the contrary, that there exist $C_1, C_2\in \mathcal{C}((\mathbb{J}, \upsilon(\mathbb{J})))$ such that $A\subseteq C_1\cup C_2$ but $A\nsubseteq C_1$ and $A\nsubseteq C_2$. Then $C_1\neq \mathbb{J}\neq C_2$ and hence $C_1=\bigcap_{i\in I}\downarrow F_i$ and $C_2=\bigcap_{j\in J}\downarrow G_j$ for some $\{F_i : i\in I\}\subseteq \mathbb{J}^{(<\omega)}$ and $\{G_j : j\in J\}\subseteq \mathbb{J}^{(<\omega)}$. As $A\nsubseteq C_1$ and $A\nsubseteq C_2$, there is $(i_0, j_0)\in I\times J$ such that $A\nsubseteq \downarrow F_{i_0}$ and $A\nsubseteq \downarrow G_{j_0}$, and $A\subseteq C_1\cup C_2\subseteq \downarrow F_{i_0}\cup \downarrow G_{j_0}=\downarrow (F_{i_0}\cup G_{j_0})$. As $\{n\in \mathbb{N}^{+} : A\cap \mathbb{J}_n\neq\emptyset\}$ is infinite and $F_{i_0}\cup G_{j_0}$ is finite, there is $n\in \mathbb{N}^{+}$ such that $(n, \infty)\in F_{i_0}\cup G_{j_0}$ and $A\subseteq \downarrow (n, \infty)$. Then $A\subseteq \downarrow F_{i_0}$  or $A\subseteq \downarrow  G_{j_0}$, a contradiction. Therefore, $A\in \ir((\mathbb{J}, \upsilon(\mathbb{J})))$.

\item $\mathsf{K}(\Sigma~\!\!\mathbb{J})=(2^{\mathbb{J}_{max}} \setminus \{\emptyset\})\bigcup \mathbf{Fin}~\!\mathbb{J}\subseteq \mathsf{K}((\mathbb{J}, \upsilon(\mathbb{J})))$ by (iii) and $\upsilon(\mathbb{J})\subseteq \sigma(\mathbb{J})$

% Suppose $\emptyset \neq K\subseteq \mathbb{J}_{max}$. Clearly, $K$ is saturated in  $(\mathbb{J}, \upsilon(\mathbb{J}))$. Assume $K\subseteq \bigcup_{i\in I}(\mathbb{J}\setminus \downarrow F_i)$, where $F_i\in \mathbb{J}^{(<\omega)} (i\in I)$. Select one point $(m, \infty)\in K$. Then $(m, \infty)\in \mathbb{J}\setminus \downarrow F_j$ for some $j\in I$ and hence $K\setminus K\cap F_j\subseteq \mathbb{J}\setminus \downarrow F_j$ (note that $K\subseteq \mathbb{J}_{max}$). As $K\cap F_j$ is finite, there is $I_j\in I^{(<\omega)}$ such that $K\cap F_j\subseteq \bigcup_{i\in I_j}(\mathbb{J}\setminus \downarrow F_i)$, and consequently, $K\subseteq \bigcup_{i\in I_j\cup\{j\}}(\mathbb{J}\setminus \downarrow F_i)$. Thus $K$ is compact in $(\mathbb{J}, \upsilon(\mathbb{J}))$.

\item $(\mathbb{J}, \upsilon(\mathbb{J}))$ is not well-filtered and hence not strongly well-filtered.

Let $\mathcal K=\{\mathbb{J}_{max}\setminus F : F\in (\mathbb{J}_{max})^{(<\omega)}\}$. Then by (b), $\mathcal K\subseteq \mathsf{K}((\mathbb{J}, \upsilon(\mathbb{J}))$ is a filtered family and $\bigcap\mathcal{K}=\bigcap_{F\in (\mathbb{J}_{max})^{(<\omega)}} (\mathbb{J}_{max}\setminus F)=\mathbb{J}_{max}\setminus \bigcup (\mathbb{J}_{max})^{(<\omega)})=\emptyset$, but $\mathbb{J}_{max}\setminus F\neq\emptyset$ for all $F\in (\mathbb{J}_{max})^{(<\omega)}$. Therefore, $(\mathbb{J}, \upsilon(\mathbb{J}))$ is not well-filtered, and hence it is not strong well-filtered.
\end{enumerate}
\end{example}

\begin{example}\label{exam-not-supsemlattice-not-coherent}
Let $P=\{b_n: n\in\mathbb N^{+}\}\cup\{\bot_1, \bot_2\}$ and define a partial order $\leq$ on $P$ as follows (see Figure 3):
\begin{enumerate}[\rm (1)]
\item $b_{n+1}< b_n$ for any $n\in\mathbb N^{+}$,
\item $\bot_1<b_n, \bot_2<b_n$ for all $n\in\mathbb N^{+}$, and
\item $\bot_1$ and $\bot_2$ are incomparable.
\end{enumerate}
\begin{figure}[ht]
	\centering
	\includegraphics[height=4.5cm,width=4.5cm]{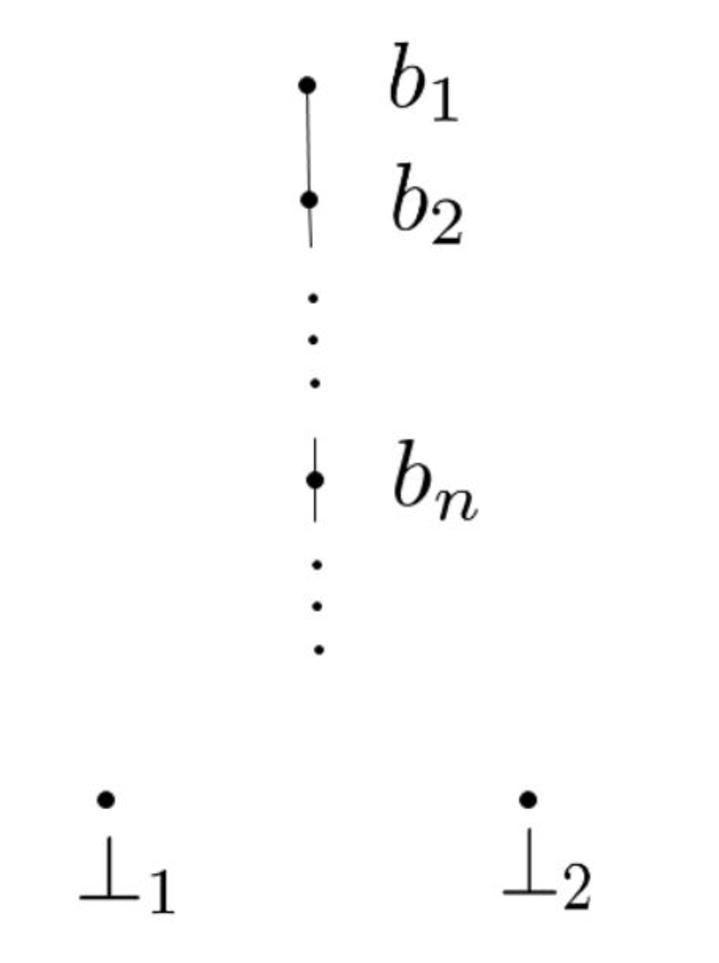}
	\caption{The poset $P$ in Example \ref{exam-not-supsemlattice-not-coherent}}
\end{figure}

\noindent Consider the upper topology space $(P, \upsilon(P))$. Then

\begin{enumerate}[\rm (a)]
\item $P$ is Noetherian, and hence $P$ is an algebraic domain and $\sigma(P)=\alpha(P)$ by Proposition \ref{Alexandroff-topology-sober}.
\item $\upsilon(P)=\alpha(P)=\{\uparrow b_n : n\in \mathbb{N}^+\}\cup\{\uparrow \top_1, \uparrow\top_2 \}\cup\{\{b_n: n\in\mathbb{N}^+\}\}\cup\{P, \emptyset\}$.

Suppose that $\emptyset\neq A=\downarrow A$. We show that $A$ is closet in $(P, \upsilon(P))$. If $A\setminus \{\bot_1, \bot_2\}\}$, then clearly $A\in \Gamma ((P, \upsilon(P))$. Now we assume that $A\cap \{b_n : n\in\mathbb{N}^+\}\neq\emptyset$. Let $m=\mathrm{min}\{n\in  n\in\mathbb{N}^+ : b_n\in A\}$. Then $A=\downarrow A=\downarrow b_m\in \Gamma ((P, \upsilon(P))$. Hence $\upsilon(P)=\alpha(P)=\mathbf{up}(P)=\{\uparrow b_n : n\in \mathbb{N}^+\}\cup\{\uparrow \top_1, \uparrow\top_2 \}\cup\{\{b_n: n\in\mathbb{N}^+\}\}\cup\{P, \emptyset\}$.

\item $\ir((P, \upsilon(P)))=\ir((P, \alpha(P)))=\mathcal D(P)=2^{P}\setminus\{\emptyset, \{\bot_1, \bot_2\}\}$ by (a) and (b).

%Clearly, $\mathcal D(P)\subseteq \ir((P, \upsilon(P)))\subseteq 2^{P}\setminus\{\emptyset, \{\bot_1, \bot_2\}\}$. Let $A\in 2^{P}\setminus\{\emptyset, \{\bot_1, \bot_2\}\}$. If $A\setminus \{\bot_1, \bot_2\}=\emptyset$, then $A=\{\bot_1\}$ or $A=\{\bot_2\}$, whence $A\in \mathcal D(P)$. If $A\setminus \{\bot_1, \bot_2\}\neq\emptyset$, then $\downarrow A=\downarrow (A\setminus \{\bot_1, \bot_2\})\in \mathcal D(P)$, and consequently, $A\in \mathcal D(P)$. So $2^{P}\setminus\{\emptyset, \{\bot_1, \bot_2\}\}\subseteq \mathcal D(P)$.
%Thus $\ir((P, \upsilon(P)))=\mathcal D(P)=2^{P}\setminus\{\emptyset, \{\bot_1, \bot_2\}\}$.

\item $(P, \upsilon(P))$ is irreducible complete by (a) and (c).
\item $(P, \upsilon(P))$ is sober by (d) and Proposition \ref{prop-irr-complete-sober}.
\item $P$ is not a sup-semilattice ($\bot_1\vee\bot_2$ does not exist in $P$) and hence it is not a sup-complete poset.
\item $\mathsf{K}((P, \upsilon(P)))=\mathsf{K}((P, \alpha(P)))=\mathbf{Fin}~\!\!P=\mathbf{up}(P)\setminus\{\{b_n:~n\in\mathbb N^{+}\}\}=\{P, \uparrow\top_1, \uparrow\top_2\}\cup\{\uparrow a_n:~n\in\mathbb N^{+}\}$.

We only need to show that $\{b_n:~n\in\mathbb N^{+}\}\notin\mathsf{K}((P, \upsilon(P)))$. In fact, $\{b_n:~n\in\mathbb N^{+}\}\subseteq\bigcup\limits_{m\in\mathbb N^{+}}(P\setminus\downarrow b_m)=\bigcup\limits_{l\in\mathbb N^{+}, l\geq 2 }\{b_1,~b_2,~\cdots,~b_{l-1}\}$. But  $\{b_n:~n\in\mathbb N^{+}\}\subseteq P\setminus\downarrow b_m$ for no $m\in\mathbb N^{+}$. Hence $\{b_n:~n\in\mathbb N^{+}\}\notin\mathsf{K}((P, \upsilon(P)))$.

\item $(P, \upsilon(P))$ is not coherent.

 $\uparrow\bot_1, \uparrow \bot_2\in\mathsf{K}((P, \upsilon(P)))$, but $\uparrow\bot_1\cap\uparrow\bot_2=\{b_n:~n\in\mathbb N^{+}\}\notin\mathsf{K}((P, \upsilon(P)))$ by (g). So $(P, \upsilon(P))$ is not coherent.
\end{enumerate}
\end{example}

%\begin{example}\label{exam-coherent-not-swf}
%Let $X$ be a countable infinite set and $X_{cof}$ the space equipped with the \emph{co-finite topology} (the empty set and the complements of finite subsets of $\mathbb{N}$ are open, namely $\mathcal O(X_{cof})=\upsilon(X)$). Then
%\begin{enumerate}[\rm (a)]
%\item $\mathcal C(X_{cof})=\{\emptyset, X\}\cup X^{(<\omega)}$ and $X_{cof}$ is second-countable.
%\item $X_{cof}$ is $T_1$ and hence its specialization order is the discrete order.
%\item $\mathcal D(X_{cof})=\{\{x\}:~x\in X\}$, thus $X_{cof}$ is a dcpo.
%\item $\ir(X_{cof})=\{\{x\} : x\in X\}\cup\{A\subseteq X : |A|=\omega\}$ and $\ir_c(X_{cof})=\{\{x\} : x\in X\}\cup\{X\}$, thus $X_{cof}$ is not irr-complete.
%\item $\mk (X_{cof})=2^X\setminus \{\emptyset\}$ and hence $X_{cof}$ is a coherent space.
%\item $X_{cof}$ is not well-filtered, thus it is not a strongly well-filtered space.

%Let $\mathcal{K}_{X}=\{X\setminus F : F\in X^{(<\omega)}\}$. Then by (d) $\mathcal{K}_{X} \subseteq\mk (X_{cof})$ is a countable filtered family and $\bigcap \mathcal{K}_X=X\setminus \bigcup X^{(<\omega)}=X\setminus X=\emptyset$, but $X\setminus F=\emptyset$ for no $ F\in X^{(<\omega)}$. Thus $X_{cof}$ is not well-filtered, and consequently, it is not a strongly well-filtered space.
%\end{enumerate}
%\end{example}

\section{Conclusion}

In this paper, we have introduced a new kind of $T_0$-spaces --- strong R-spaces,  lying between strongly well-filtered spaces and $T_2$-spaces. The strong R-spaces are closely related to R-spaces, strongly well-filtered spaces and
well-filtered spaces. By introducing and studying strong R-spaces, we have provided some finer
links between $d$-spaces and $T_2$-spaces and given answers to two questions recently posed by Xu.

The R-spaces and strongly well-filtered spaces have been proven to be two important classes of $T_0$-spaces (cf. \cite{Lawson-Xu-2024-2, Wen-Xu-2018, Xu-2016-2, Xu-2025}). Our study in this paper indicates that strong R-spaces may deserve further investigation.

\vspace{0.3cm}

\noindent{\bf References}

\end{document}